\providecommand{\keywords}[1]{{\it Keywords: } #1}
\providecommand{\ams}[1]{{\it AMS Classification: } #1}
\newtheorem{lemma}{Lemma}
\newtheorem{theorem}{Theorem}
\newtheorem{proposition}{Proposition}
\newtheorem{corollary}{Corollary}
\theoremstyle{definition}
\newtheorem{remark}{Remark}
\newtheorem*{conjecture}{Conjecture}
\newcommand{\R}{\mathbb{R}}
\newcommand{\C}{\mathbb{C}}
\renewcommand{\S}{\mathbb{S}}
\renewcommand{\Im}{\mathrm{Im}}
\renewcommand{\Re}{\mathrm{Re}}
\renewcommand{\i}{\mathrm{i}}
\renewcommand{\bar}[1]{\overline{#1}}
\renewcommand{\bf}[1]{\mathbf{#1}}
\newcommand{\bfg}[1]{\boldsymbol{#1}}
\def\diag{\mathrm{diag}}
\newcommand{\norm}[1]{\Vert #1 \Vert}
\begin{document}
  
\title{Minimum trace norm of real symmetric and Hermitian matrices with zero diagonal}

\author{Mostafa  Einollahzadeh}

\date{}

\maketitle

  \begin{abstract}
  We obtain tight lower bounds for the trace norm \mbox{$\norm{\cdot}_1$} of some matrices with diagonal zero in terms of the entry-wise $L^1$-norm (denoted by $\norm{\cdot}_{(1)}$). It is shown that in the space of nonzero real symmetric matrices $A$ of order $n$ with diagonal zero, the minimum value  of quantity $\frac{\norm{A}_1}{\norm{A}_{(1)}}$ is equal to $\frac{2}{n}$. The answer to a similar problem in the space of Hermitian matrices is also obtained to be equal to $\tan(\frac{\pi}{2n})$. As an equivalent form of these results, it is shown that for every real symmetric matrix $A$ of order $n$ with off-diagonal entries of absolute values at most $1$, there always exists a diagonal matrix $D$ that is  at distance  $\leq\frac{n}{2}$  of $A$ in the spectral norm. The similar problem for Hermitian matrices, has answer $\cot(\frac{\pi}{2n})$.  
  \end{abstract}

  \ams{15A42, 05C50}

  \keywords{Trace norm, Matrix norm inequality, Graph energy, Nearest diagonal matrix}

\section{Introduction}

The trace norm of a matrix (denoted by $\norm{\cdot}_1$) is defined as the sum of its singular values and the spectral norm (denoted by $\norm{\cdot}_\infty$) is defined as the maximum singular value. In particular, in the case of Hermitian matrices or real symmetric matrices, which we consider in this study, the singular values are equal to the absolute values of the eigenvalues, which can be used to define the above norms. The trace norm is also known by ``energy" in some references, mostly those related to algebraic graph theory or its applications in chemistry (cf. \cite{gutman} for a survey).

The main idea of this paper comes from a conjecture by Haemers in  \cite{haemers2012seidel} (proved in\cite{akbari2020proof}), on ``the minimum energy of Seidel matrix of simple graphs", which can be restated in our terminology as follows:

\begin{conjecture}
Let $A$ be an arbitrary symmetric matrix of order $n$, with diagonal entries equal to zero and off-diagonal entries in $\{\pm 1\}$. Then the minimum value of $\norm{A}_1$ is equal to $\norm{J_n-I_n}_1=2n-2$ for the all-ones matrix $J_n$ and the identity matrix $I_n$, both of order $n$.
\end{conjecture}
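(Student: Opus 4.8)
I would deduce the conjecture from the scale-free inequality
\[
\norm{A}_1 \ \geq\ \tfrac{2}{n}\,\norm{A}_{(1)}\qquad(0\neq A\in V),
\]
where $V$ is the space of real symmetric matrices of order $n$ with zero diagonal and $\norm{A}_{(1)}=\sum_{i\neq j}|a_{ij}|$: if the off-diagonal entries of $A$ lie in $\{\pm1\}$ then $\norm{A}_{(1)}=n(n-1)$, so the inequality gives $\norm{A}_1\geq 2n-2$, and if in addition the equality case is pinned down to scalar multiples of matrices switching-equivalent to $J_n-I_n$, the full conjecture --- bound and minimizers --- follows. So everything reduces to proving this inequality with its equality analysis.

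First I would record the polar ``nearest diagonal'' reformulation. Since $\norm{A}_1=\max\{\langle A,B\rangle:\norm{B}_\infty\leq1\}$ and $\langle A,B\rangle$ sees only the off-diagonal part of $B$, the functionals $A\mapsto\norm{A}_1$ and $C\mapsto\min_{D\ \mathrm{diagonal}}\norm{C-D}_\infty$ are polar to each other on $V$; as $\norm{\cdot}_{(1)}$ and $C\mapsto\max_{i\neq j}|c_{ij}|$ are also polar on $V$, the displayed inequality is \emph{equivalent} to: every $C\in V$ with off-diagonal entries of modulus $\leq1$ has a diagonal $D$ with $\mathrm{spec}(C-D)\subseteq[-\tfrac n2,\tfrac n2]$. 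This is the form I would attack --- a balancing problem for the diagonal rather than a singular-value problem.

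To construct $D$, take a minimizer of $\norm{C-D}_\infty$; at any such minimizer one necessarily has $\lambda_{\max}(C-D)=-\lambda_{\min}(C-D)=:\mu$. The first-order conditions in the $d_i$ --- no coordinate can simultaneously lower $\lambda_{\max}$ and $-\lambda_{\min}$ --- feed into the subdifferential of $\norm{\cdot}_\infty$ to yield positive combinations $P,Q$ of the spectral projectors onto the $\mu$- and $(-\mu)$-eigenspaces with $\mathrm{tr}\,P=\mathrm{tr}\,Q=1$ and \emph{equal diagonals}; then $M:=P-Q\in V$ satisfies $-I\preceq M\preceq I$ and $\mathrm{tr}\,M_+=\mathrm{tr}\,M_-=1$, and pairing the eigenvalue equations against $P,Q$ (the diagonal of $D$ drops out) gives $2\mu=\langle M,C\rangle\leq\norm{M}_{(1)}$. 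It remains to show $\norm{M}_{(1)}\leq n$. When the extreme eigenspaces are one-dimensional this is clean: $M=uu^\top-ww^\top$ with $u\perp w$ forces $u_i^2=w_i^2$, whence $\norm{M}_{(1)}=4\big(\sum_{S_+}|u_i|\big)\big(\sum_{S_-}|u_i|\big)\leq2\sqrt{|S_+|\,|S_-|}\leq n$, with equality only if $|u_i|\equiv1/\sqrt n$ --- which turns out to force $C$ to be a switching of $J_n-I_n$.

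The step I expect to be the real obstacle is the bound $\norm{M}_{(1)}\leq n$ for higher-dimensional extreme eigenspaces: using only $-I\preceq M\preceq I$, $M\in V$, $\mathrm{tr}\,M_+=\mathrm{tr}\,M_-=1$ gives back exactly $\norm{M}_1/\norm{M}_{(1)}\geq\tfrac2n$, the very inequality we started from, so the optimality conditions alone run in a circle and a genuinely new input is needed to exploit that $M$ is built from the extreme eigenspaces of $C-D$. Three routes I would try here: (i) attack the convex problem $\max\{\norm{A}_{(1)}:A\in V,\ \norm{A}_1\leq1\}$ directly --- its value is the reciprocal of the constant sought, the maximum is attained at an extreme point of the trace-norm ball inside $V$, and equality in the trace-norm triangle inequality should confine such extremizers to a low-rank, few-distinct-values normal form one can classify; (ii) induction on $n$ via deleting a row and column and controlling the spectrum by interlacing; (iii) a Coulson-type formula $\norm{C}_1=\tfrac1\pi\int_{\R}\!\big(n-t^2\,\mathrm{tr}(C^2+t^2I)^{-1}\big)\,dt$ combined with the entry bound on $C$. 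Along whichever route works I would carry the equality case through to identify the minimizers; and an easy special case can be cleared at the outset --- if some sign switching of $A$ has spectral norm $\geq n-1$, then $\norm{A}_1\geq2\norm{A}_\infty\geq2n-2$ immediately.
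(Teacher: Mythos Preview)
Your opening reduction is exactly right and is what the paper does: the conjecture is an immediate corollary of the scale-free bound $\norm{A}_1\ge\frac{2}{n}\norm{A}_{(1)}$ on zero-diagonal real symmetric matrices (the paper's Theorem~1), applied with $\norm{A}_{(1)}=n(n-1)$. You do not need a full equality classification for the conjecture as stated; it only asks for the value of the minimum and that $J_n-I_n$ attains it, both of which follow at once.

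Where you diverge is in proving the inequality itself. You pass to the dual ``nearest diagonal'' problem and analyse first-order optimality, but --- as you yourself flag --- the key step $\norm{M}_{(1)}\le n$ for the witness $M=P-Q$ collapses back to the very inequality you are trying to prove, so the argument is genuinely incomplete, and none of routes (i)--(iii) is actually carried out. The paper, incidentally, runs the duality in the \emph{opposite} direction: it proves $\norm{A}_1\ge\frac{2}{n}\norm{A}_{(1)}$ directly and then deduces the spectral-distance statement (its Theorem~3) by precisely the polarity you describe.

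The paper's direct proof is short and avoids optimality conditions entirely. Write $A=A^{+}-A^{-}$; since $\diag(A)=0$ the two positive semidefinite parts share a common diagonal $(d_1,\dots,d_n)$ and $\norm{A}_1=2\sum_i d_i$. The heart of the matter is Proposition~1: for any two PSD matrices $P,Q$ with the same diagonal $(d_1,\dots,d_n)$,
\[
\sum_{i,j}|P_{ij}-Q_{ij}|\ \le\ \Bigl(\sum_i\sqrt{d_i}\Bigr)^{2}.
\]
This is obtained entrywise from the elementary bound $|a-b|\le d-\tfrac{ab}{d}$ (valid for $|a|,|b|\le d$), summed, and closed off by the Schur product theorem: $P\circ Q$ is PSD, hence $\mathbf{e}^{T}(P\circ Q)\,\mathbf{e}\ge 0$ for $\mathbf{e}=(1/\sqrt{d_1},\dots,1/\sqrt{d_n})$. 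Applying this with $P=A^{+}$, $Q=A^{-}$ and then $(\sum_i\sqrt{d_i})^{2}\le n\sum_i d_i$ gives $\norm{A}_{(1)}\le\frac{n}{2}\norm{A}_1$. This PSD-decomposition plus Schur-product step is the ``new input'' your dual approach was missing; it bypasses the circularity you identified and makes the general (high-multiplicity) case no harder than the rank-one case you handled.
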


Numerical computations for small values of $n$, suggest that the matrix $J_n-I_n$ has the minimum trace norm in the set of all diagonal zero matrices of order $n$ for which the sum of the absolute values of the entries is equal to $n(n-1)$. Therefore, the above conjecture on the discrete set of matrices with $\{\pm 1\}$ off-diagonal entries can be generalized to provide a lower bound for the trace norm of every diagonal zero matrix with a prescribed value for the $L^1$-norm of its entries. This is one of the main results of this paper:

\begin{theorem}\label{real}
Let $A=[a_{ij}]$ be a real symmetric  matrix of order $n$ with zero entries on the main diagonal. Then 
\[ \norm{A}_1\geq \frac{2}{n}\sum_{i,j} |a_{ij}|,\]
and the bound is sharp for all $n$.
\end{theorem}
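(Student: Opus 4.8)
\medskip
\noindent\textbf{Proof proposal.}\quad
The plan is to pass, via trace--norm duality, to the ``nearest diagonal matrix'' reformulation. Recall $\|X\|_1=\max\{\langle X,B\rangle:\|B\|_\infty\le1\}$, with $\langle X,B\rangle=\sum_{i,j}X_{ij}B_{ij}$ and the maximizer taken symmetric when $X$ is. Because $A$ has zero diagonal, $\langle A,B\rangle$ depends only on the off-diagonal part of $B$, so it suffices to exhibit \emph{one} symmetric $B$ with $\|B\|_\infty\le1$ whose off-diagonal entries are $\tfrac2n\,\mathrm{sign}(a_{ij})$: then $\langle A,B\rangle=\tfrac2n\sum_{i\ne j}|a_{ij}|=\tfrac2n\|A\|_{(1)}$, whence $\|A\|_1\ge\tfrac2n\|A\|_{(1)}$. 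Writing $C=\mathrm{sign}(A)$ (symmetric, zero-diagonal, entries in $\{-1,0,1\}$) and $B=\tfrac2n(C+D)$, this reduces Theorem~\ref{real} to: \emph{every symmetric zero-diagonal $C$ of order $n$ with $|C_{ij}|\le1$ admits a diagonal $D$ with $\|C+D\|_\infty\le\tfrac n2$} --- equivalently, after re-centering $D$ by a scalar matrix so that $\lambda_{\max}(C+D)=-\lambda_{\min}(C+D)$, $\min_D\operatorname{spread}(C+D)\le n$. Optimality of the constant is the easy half: a short Rayleigh-quotient computation (test $C+D$ against $\mathbf 1$ and against $e_j-e_k$ for the two coordinates with smallest $d_i$) gives $\operatorname{spread}(J_n-I_n+D)\ge n$ for every diagonal $D$, with equality at $D=(1-\tfrac n2)I_n$, i.e.\ $J_n-\tfrac n2 I_n$, which realizes the ratio $\tfrac2n$.

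For the core claim I would rewrite the two spectral bounds in Rayleigh form: we seek $d\in\R^n$ so that for every unit vector $v$, putting $t_i:=v_i^2$ (so $t$ is in the standard simplex $\Delta$),
\[
-\tfrac n2\ \le\ v^{T}Cv+\sum_i d_i t_i\ \le\ \tfrac n2.
\]
The combinatorial engine is the elementary bound: for $t\in\Delta$ and any partition $[n]=S\sqcup T$, with $t(S):=\sum_{i\in S}t_i$,
\[
\left(\sum_{i\in S}\sqrt{t_i}\right)\!\left(\sum_{j\in T}\sqrt{t_j}\right)\ \le\ \sqrt{|S|\,|T|}\;\sqrt{t(S)\,t(T)}\ \le\ \frac n2\cdot\frac12\ =\ \frac n4,
\]
from coordinatewise Cauchy--Schwarz followed by $|S|+|T|=n$ and $t(S)+t(T)=1$. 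For two unit vectors $v,w$ of the same profile ($|v_i|=|w_i|$ for all $i$), $v^{T}Cv-w^{T}Cw=(v-w)^{T}C(v+w)$ and the vectors $v-w,\,v+w$ are supported on complementary sets with entries of modulus $0$ or $2\sqrt{t_i}$; feeding this into the estimate (and using $|C_{ij}|\le1$) gives $|v^{T}Cv-w^{T}Cw|\le n$. Hence for each fixed $t$ the set of values the displayed inequalities allow for $\sum_i d_i t_i=v^{T}Dv$ is a \emph{nonempty} interval --- and for $C=J_n-I_n$ with $t$ uniform every inequality here becomes an equality, which is precisely what pins down $\sum_i d_i$ at the extremal configuration.

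The step I expect to be the real obstacle is upgrading ``nonempty for each $t$ separately'' to ``a single $d$ lies in all of these intervals simultaneously'': the constraints on $d$ form a family of parallel slabs indexed by $\Delta$, and individual nonemptiness need not force a common point. I would attack this by linear-programming duality over $\Delta$: an infeasible system would yield probability measures $\mu,\nu$ on $\Delta$ with a common barycenter and $\int U\,d\mu<\int L\,d\nu$ for the upper/lower bounding functions $U,L$ above, and one must rule this out --- i.e.\ show the convex envelope of $U$ dominates the concave envelope of $L$ on $\Delta$ --- which I expect to follow from applying the combinatorial estimate in expectation over $\mu,\nu$ rather than pointwise. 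A more hands-on alternative is to construct $d$ from extremal eigenvectors: balanced sign patterns are immediate (if $s\in\{\pm1\}^n$ satisfies $s_is_ja_{ij}\ge0$ for all $i,j$, then $B=\tfrac2n\,ss^{T}-I_n$ already works), and a general $C$ would be reduced to this modulo finitely many ``frustrated'' configurations. Either way, all the difficulty is concentrated in this consistency step; the reductions preceding it are formal, and $J_n-I_n$ certifies that $\tfrac2n$ is best possible.
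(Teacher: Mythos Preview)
Your reduction to the ``nearest diagonal matrix'' problem is correct and is essentially the paper's Section~4 run in reverse: there Theorem~\ref{realdual} is \emph{derived} from Theorem~\ref{real} by convex duality, so your proposal shifts the entire burden onto a direct proof of the dual statement. That is where the gap lies. Your profile-by-profile estimate $|v^{T}Cv-w^{T}Cw|\le n$ for unit vectors $v,w$ with $|v_i|=|w_i|$ is correct, and it does show that for each fixed $t\in\Delta$ the admissible interval for $\langle t,d\rangle$ is nonempty; but, as you yourself acknowledge, this does not produce a common $d$, and neither the LP-duality route nor the ``construct $d$ from extremal eigenvectors'' route is actually carried out.

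If you do chase the LP-duality outline, infeasibility yields probability measures $\mu,\nu$ on unit vectors with $E_\mu[v_i^2]=E_\nu[w_i^2]=:\bar t_i$ and $E_\mu[v^{T}Cv]-E_\nu[w^{T}Cw]>n$. Setting $M:=E_\mu[vv^{T}]$ and $N:=E_\nu[ww^{T}]$, these are positive semi-definite with common diagonal $\bar t\in\Delta$, and (since $|C_{ij}|\le 1$) the inequality to rule out becomes $\sum_{i,j}|M_{ij}-N_{ij}|>n$. So what your consistency step actually requires is: for PSD matrices $M,N$ with $\diag M=\diag N=(\bar t_1,\dots,\bar t_n)$, one has $\sum_{i,j}|M_{ij}-N_{ij}|\le\bigl(\sum_i\sqrt{\bar t_i}\bigr)^{2}$. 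This is exactly the paper's Proposition~\ref{prop1}, proved via the Schur product theorem together with the scalar inequality $|a-b|\le d-ab/d$ for $|a|,|b|\le d$. Your rank-one bound is the special case $M=vv^{T}$, $N=ww^{T}$; the missing step is precisely the passage from rank one to arbitrary PSD. And once Proposition~\ref{prop1} is available, the paper's route is shorter and avoids duality altogether: apply it directly to $A^{+}$ and $A^{-}$ (which share a diagonal because $\diag A=0$), then use $\norm{A}_1=\mathrm{tr}(A^{+})+\mathrm{tr}(A^{-})=2\sum_i d_i$ and Cauchy--Schwarz on $\bigl(\sum_i\sqrt{d_i}\bigr)^{2}\le n\sum_i d_i$.
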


A similar problem can be imposed on other spaces of matrices. We also obtained the best result for the space of Hermitian matrices:

\begin{theorem}\label{complex}
Let $A=[a_{ij}]$ be a Hermitian matrix of order $n$ with zero entries on the main diagonal. Then 
\[ \norm{A}_1\geq \tan(\frac{\pi}{2n})\sum_{i,j} |a_{ij}|,\]
and the bound is sharp for all $n$.
\end{theorem}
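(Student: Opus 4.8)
The plan is to prove sharpness via an explicit extremal matrix and to prove the inequality by translating it into the ``nearest diagonal matrix'' statement from the abstract and then analysing an optimal diagonal.

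For sharpness I would take $\omega=e^{\mathrm{i}\pi/n}$ and the unit vectors $p_1,p_2\in\C^n$ with $(p_1)_j=\tfrac1{\sqrt n}\omega^{j}$, $(p_2)_j=\tfrac1{\sqrt n}\omega^{-j}$ ($j=0,\dots,n-1$). Since $\sum_j\omega^{\pm2j}=0$ they are orthogonal, so $A^*:=p_1p_1^*-p_2p_2^*$ is Hermitian with eigenvalues $1,-1,0,\dots,0$, and $|(p_1)_j|=|(p_2)_j|=1/\sqrt n$ makes its diagonal zero. Then $a^*_{jk}=\tfrac{2\mathrm{i}}{n}\sin\tfrac{\pi(j-k)}{n}$, so $\norm{A^*}_1=2$, while the classical identity $\sum_{m=1}^{n-1}\sin\tfrac{\pi m}{n}=\cot\tfrac{\pi}{2n}$ gives $\norm{A^*}_{(1)}=\tfrac2n\sum_{j\ne k}\bigl|\sin\tfrac{\pi(j-k)}{n}\bigr|=2\cot\tfrac{\pi}{2n}$; hence $\norm{A^*}_1/\norm{A^*}_{(1)}=\tan\tfrac{\pi}{2n}$.

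For the inequality, the first step is the reformulation: since $(\norm{\cdot}_1,\norm{\cdot}_\infty)$ and $(\norm{\cdot}_{(1)},\norm{\cdot}_{(\infty)})$ are dual pairs and $\mathrm{tr}(AS)=\mathrm{tr}(A(S-D))$ for every diagonal $D$ when $A$ has zero diagonal, the bound $\norm{A}_1\ge\tan\tfrac{\pi}{2n}\norm{A}_{(1)}$ is equivalent to: for every Hermitian $B$ of order $n$ with zero diagonal and off-diagonal entries of modulus $\le1$, some diagonal $D$ satisfies $\norm{B-D}_\infty\le\cot\tfrac{\pi}{2n}$. I would fix such a $B$, take $D$ minimizing $\rho:=\norm{B-D}_\infty>0$, note that $\rho$ and $-\rho$ are both eigenvalues of $B-D$ (a scalar shift would otherwise lower $\rho$), and use first-order optimality of $D$ for the convex map $D'\mapsto\max(\lambda_{\max}(B-D'),-\lambda_{\min}(B-D'))$: there are trace-one positive semidefinite $P_+,P_-$ supported on the $\rho$- and $(-\rho)$-eigenspaces with $\diag(P_+)=\diag(P_-)$ (the two subgradients entering with equal weight, forced by taking traces). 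Then $R:=P_+-P_-$ has zero diagonal, $\mathrm{tr}(DP_+)=\mathrm{tr}(DP_-)$, so $2\rho=\mathrm{tr}((B-D)P_+)-\mathrm{tr}((B-D)P_-)=\mathrm{tr}(BR)\le\norm{B}_{(\infty)}\norm{R}_{(1)}\le\norm{R}_{(1)}$, reducing everything to showing $\norm{R}_{(1)}\le2\cot\tfrac{\pi}{2n}$ for positive semidefinite $P_\pm$ of trace $1$ with equal diagonals.

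The core of the argument — and what I expect to be the main obstacle — is this last bound. Factoring $P_+=G^*G$, $P_-=H^*H$ with columns of equal norm $\norm{g_j}=\norm{h_j}=\sqrt{\delta_j}$ ($\sum\delta_j=1$) and setting $x_j=g_j\oplus h_j$, $y_j=g_j\oplus(-h_j)$ gives $\norm{x_j}=\norm{y_j}$, $x_j\perp y_j$, $R_{jk}=\langle x_j,y_k\rangle$, so one must bound $\sum_{j\ne k}|\langle x_j,y_k\rangle|$. When $P_\pm$ have rank one this equals $2\sum_{j\ne k}\sqrt{\delta_j\delta_k}\,\bigl|\sin\tfrac{\gamma_j-\gamma_k}{2}\bigr|$ (with $\gamma_j$ the phase difference of the $j$-th columns), which — $(\sqrt{\delta_j})_j$ being a unit vector — is at most $2\lambda_{\max}(M_\gamma)$ for $M_\gamma=\bigl[\,|\sin\tfrac{\gamma_j-\gamma_k}{2}|\,\bigr]=\tfrac12\bigl[\,|z_j-z_k|\,\bigr]$, $z_j=e^{\mathrm{i}\gamma_j}$. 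So the matter comes down to the geometric assertion that \emph{the Euclidean distance matrix of any $n$ points on the unit circle has largest eigenvalue at most $2\cot\tfrac{\pi}{2n}$}, with equality at the $n$-th roots of unity, where $M_\gamma$ is circulant with row sum $\cot\tfrac{\pi}{2n}$. Using $|\sin\tfrac x2|=\tfrac2\pi-\tfrac4\pi\sum_{m\ge1}\tfrac{\cos mx}{4m^2-1}$ one gets $M_\gamma=\tfrac2\pi\mathbf{1}\mathbf{1}^\top-\tfrac4\pi\sum_{m\ge1}\tfrac1{4m^2-1}(c_mc_m^\top+s_ms_m^\top)$ with $c_m=(\cos m\gamma_j)_j$, $s_m=(\sin m\gamma_j)_j$, whence $M_\gamma\preceq\tfrac2\pi\mathbf{1}\mathbf{1}^\top$ and $\lambda_{\max}(M_\gamma)\le\tfrac{2n}{\pi}$; but $\cot\tfrac{\pi}{2n}<\tfrac{2n}{\pi}$, so this is just short, and closing the gap — i.e.\ showing the uniform configuration is extremal, exploiting that there are only $n$ points and that they lie on a circle, so the subtracted positive-semidefinite part cannot be too small on the Perron vector — is the crux. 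What remains (degenerate extreme eigenvalues of $B-D$, handled by carrying $P_\pm$ throughout; promoting the rank-one estimate to arbitrary rank by a convexity/extreme-point reduction; and the sums $\sum_{m=1}^{n-1}\sin\tfrac{\pi m}{n}=\cot\tfrac{\pi}{2n}$, $\sum_{m\ge1}\tfrac1{4m^2-1}=\tfrac12$) is routine bookkeeping.
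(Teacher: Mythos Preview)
Your sharpness example and the duality/KKT reduction are correct and clean: fixing an optimal diagonal $D$, you extract trace-one positive semidefinite $P_+,P_-$ supported on the extreme eigenspaces with $\diag(P_+)=\diag(P_-)$, and reduce the whole theorem to
\[
\norm{P_+-P_-}_{(1)}\;\le\;2\cot\tfrac{\pi}{2n}\qquad\text{whenever }P_\pm\succeq0,\ \mathrm{tr}\,P_\pm=1,\ \diag(P_+)=\diag(P_-).
\]
But this inequality is \emph{exactly} the paper's Proposition~\ref{cprop} (together with Cauchy--Schwarz on $(\sum\sqrt{d_i})^2\le n\sum d_i$). So your detour through the dual problem lands you back on the very statement the paper proves directly and then uses to derive Theorem~\ref{complex}; the duality step buys nothing here.

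More importantly, the two remaining steps you label as ``the crux'' and ``routine bookkeeping'' are neither resolved nor routine.
\begin{itemize}
\item[(i)] The rank-one case asks for $\lambda_{\max}\bigl([\,|z_j-z_k|\,]\bigr)\le 2\cot\tfrac{\pi}{2n}$ for $z_1,\dots,z_n\in\S^1$. Your Fourier decomposition only gives $2n/\pi$, and you correctly note that closing the gap is the heart of the matter. The paper obtains the equivalent (in fact slightly stronger, under the $L^1$ normalisation) bound $\sum_{i,j}d_id_j|\omega_i-\omega_j|\le\gamma_n(\sum_i d_i)^2$ as Lemma~\ref{clem2}, via a compactness/Lagrange-multiplier argument that pins down the unique maximiser (equal weights at the $n$-th roots of unity); this is the longest proof in the paper (Section~\ref{clem2sec}) and is not bypassed by your Fourier expansion.
\item[(ii)] The promotion from rank one to general rank is not a convexity/extreme-point reduction: the extreme points of $\{(P_+,P_-):P_\pm\succeq0,\ \mathrm{tr}\,P_\pm=1,\ \diag P_+=\diag P_-\}$ are \emph{not} pairs of rank-one matrices in general, so maximising the convex functional $\norm{P_+-P_-}_{(1)}$ over this set does not localise to rank one. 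The paper handles this step (inside Proposition~\ref{cprop}) by a genuinely different device: it forms $C=X^*(\bar A\circ B)X$, invokes the Schur product theorem to keep $C\succeq0$, decomposes $C=\sum_s \bf{x}_s\bf{x}_s^*$, and compares with the ``absolute-value'' matrix $D=\sum_s \bf{y}_s\bf{y}_s^*$ via the triangle inequality and Lemma~\ref{clem1}, applying the rank-one bound (Corollary~\ref{cor1}) term by term. That Hadamard-product manoeuvre is what makes the reduction work, and nothing in your outline supplies a substitute for it.
\end{itemize}
In short, your proposal is a valid reformulation but contains the same hard core as the paper's proof, and that core---Lemma~\ref{clem2} plus the Schur-product reduction of Proposition~\ref{cprop}---is precisely what is missing.
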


By considering the standard Frobenius inner product on the space of real symmetric or Hermitian matrices, we obtained equivalent ``dual" results of the above theorems. The dual results are related to the minimum distance in the spectral norm of a matrix to the space of diagonal matrices.

We denote the space of all real diagonal matrices of order $n$ by $\mathcal{D}_n$. Then, the first ``dual" theorem in the case of real symmetric matrices can be expressed as follows:
\begin{theorem}\label{realdual}
Let $A=[a_{ij}]$ be a real symmetric matrix of order $n>1$. Then
\[\min_{D\in \mathcal{D}_n} \norm{A-D}_{\infty} \;\leq\; \frac{n}{2}\max_{i\neq j} |a_{ij}|,\]
and the bound is sharp for all $n>1$.
\end{theorem}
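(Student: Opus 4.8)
The plan is to obtain Theorem~\ref{realdual} as the Banach--space dual of Theorem~\ref{real}, precisely the passage alluded to just above its statement. I work in the finite-dimensional real normed space $\mathcal{S}_n$ of symmetric matrices of order $n$ equipped with the Frobenius inner product $\langle X,Y\rangle=\mathrm{tr}(XY)=\sum_{i,j}x_{ij}y_{ij}$. Two standard facts set this up: (i) on $\mathcal{S}_n$ the spectral norm $\norm{\cdot}_\infty$ and the trace norm $\norm{\cdot}_1$ are dual with respect to $\langle\cdot,\cdot\rangle$ (a consequence of von Neumann's trace inequality), so in particular $\norm{X}_\infty=\max_{\norm{Y}_1\le 1}\langle X,Y\rangle$; and (ii) the orthogonal complement of $\mathcal{D}_n$ inside $\mathcal{S}_n$ is the subspace $\mathcal{Z}_n$ of symmetric matrices with zero diagonal, because $\langle Y,D\rangle=\sum_i y_{ii}d_{ii}$ vanishes for every diagonal $D$ exactly when $Y$ has zero diagonal.

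Given these, the Hahn--Banach formula for the distance to a subspace --- an equality here, since $\mathcal{D}_n$ is closed in a finite-dimensional space (so the minimum below is attained) and the unit ball of $(\mathcal{S}_n,\norm{\cdot}_1)$ is compact (so the maximum is attained) --- yields
\[
\min_{D\in\mathcal{D}_n}\norm{A-D}_\infty=\max\bigl\{\langle A,Y\rangle : Y\in\mathcal{Z}_n,\ \norm{Y}_1\le 1\bigr\}.
\]
For any $Y\in\mathcal{Z}_n$ one estimates the objective by
\[
\langle A,Y\rangle=\sum_{i\neq j}a_{ij}y_{ij}\le\Bigl(\max_{i\neq j}|a_{ij}|\Bigr)\sum_{i\neq j}|y_{ij}|=\Bigl(\max_{i\neq j}|a_{ij}|\Bigr)\norm{Y}_{(1)},
\]
and Theorem~\ref{real} applied to $Y$ gives $\norm{Y}_{(1)}\le\tfrac n2\norm{Y}_1\le\tfrac n2$. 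Combining these proves the asserted inequality.

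For sharpness one takes $A=J_n-I_n$, where $\max_{i\neq j}|a_{ij}|=1$ and the claimed bound is $n/2$; the diagonal matrix $D=(\tfrac n2-1)I_n$ gives $\norm{A-D}_\infty=\norm{J_n-\tfrac n2 I_n}_\infty=\tfrac n2$ (since $J_n$ has eigenvalues $n$ and $0$, so $J_n-\tfrac n2 I_n$ has eigenvalues $\pm\tfrac n2$), so the bound is attained. Optimality of this $D$ also drops out of the displayed identity together with the extremal matrix $Y=\tfrac{1}{2n-2}(J_n-I_n)$ of Theorem~\ref{real}, for which $\norm{Y}_1=1$ and $\langle A,Y\rangle=\tfrac{n(n-1)}{2n-2}=\tfrac n2$. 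I do not expect a genuine obstacle here: the mathematical content is entirely contained in Theorem~\ref{real}, and the only points demanding care are invoking the correct von Neumann duality of $\norm{\cdot}_\infty$ and $\norm{\cdot}_1$ on the \emph{real} space $\mathcal{S}_n$, and verifying that both extrema in the distance formula are attained so that it may be used as an equality rather than a one-sided bound.
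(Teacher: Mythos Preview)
Your argument is correct and is essentially the paper's own approach: the paper phrases the passage from Theorem~\ref{real} to Theorem~\ref{realdual} in the language of polar duality of convex bodies (showing $\mathcal{B}_1\cap\mathcal{D}_n^\perp\subset\frac{n}{2}\mathcal{B}_{(1)}$ dualizes to $\mathcal{B}_{(\infty)}\subset\frac{n}{2}\mathcal{B}_\infty+\mathcal{D}_n$), but this is exactly your Hahn--Banach distance formula, and indeed in Remark~\ref{remrealdual} the paper rederives the identity $\min_{D\in\mathcal{D}_n}\norm{A-D}_\infty=\max_{B\in\mathcal{B}_1\cap\mathcal{D}_n^\perp}\langle A,B\rangle$ and bounds it in precisely your two steps. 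Your sharpness verification via $A=J_n-I_n$, $D=(\tfrac{n}{2}-1)I_n$, and the dual witness $Y=\tfrac{1}{2n-2}(J_n-I_n)$ is slightly more explicit than the paper's main proof (which argues sharpness by reversibility of the duality) but matches the example given immediately after the theorem statement.
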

The equality case in the above theorem is obtained by the matrix $A=J_n$, because $\norm{J_n-\frac{n}{2}I_n}_\infty=\frac{n}{2}$ and $\frac{n}{2}I_n$ is the nearest diagonal matrix to $J_n$ in the spectral norm. It is interesting to note that the worst case in this respect is the simple matrix $J_n$.

It is worth noting that there is no effective method for finding a diagonal matrix at spectral distance $\frac{n}{2}$ from a given matrix with entries in $[-1,1]$, as we know its existence from the above theorem. This is an interesting problem on its own.

The final theorem in this study is the Hermitian version of Theorem \ref{realdual}. 
\begin{theorem}\label{complexdual}
Let $A=[a_{ij}]$ be a Hermitian matrix of order $n>1$. Then
\[\min_{D\in \mathcal{D}_n} \norm{A-D}_{\infty} \;\leq\; \cot(\frac{\pi}{2n})\max_{i\neq j} |a_{ij}|,\]
and the bound is sharp for all $n>1$.
\end{theorem}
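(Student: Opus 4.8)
The plan is to deduce Theorem~\ref{complexdual} from Theorem~\ref{complex} via the duality between the spectral norm $\norm{\cdot}_\infty$ and the trace norm $\norm{\cdot}_1$ with respect to the Frobenius inner product, in the same way that Theorem~\ref{realdual} is obtained from Theorem~\ref{real}. Work in the space $\mathcal{H}_n$ of $n\times n$ Hermitian matrices, a real vector space on which $\langle A,B\rangle:=\mathrm{tr}(AB)$ is a genuine (real, positive definite) inner product, and on which $\norm{\cdot}_\infty$ and $\norm{\cdot}_1$ are dual norms. Since $\mathrm{tr}(DB)=\sum_i D_{ii}B_{ii}$ for diagonal $D$, the orthogonal complement of $\mathcal{D}_n$ in $\mathcal{H}_n$ is precisely the subspace of Hermitian matrices with zero diagonal.

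First I would apply the standard finite-dimensional formula for the distance of a point to a subspace in a normed space (a separating-hyperplane / Hahn--Banach argument): for $A\in\mathcal{H}_n$,
\[
\min_{D\in\mathcal{D}_n}\norm{A-D}_\infty=\max\bigl\{\mathrm{tr}(AB):\ B\in\mathcal{H}_n,\ \mathrm{diag}(B)=0,\ \norm{B}_1\le 1\bigr\}.
\]
For an admissible $B=[b_{ij}]$ we have $\mathrm{tr}(AB)=\sum_{i\neq j}a_{ij}\overline{b_{ij}}$ because $B$ is Hermitian with zero diagonal, hence
\[
\mathrm{tr}(AB)\le\Bigl(\max_{i\neq j}|a_{ij}|\Bigr)\sum_{i\neq j}|b_{ij}|=\Bigl(\max_{i\neq j}|a_{ij}|\Bigr)\norm{B}_{(1)}.
\]
Applying Theorem~\ref{complex} to $B$ gives $\norm{B}_{(1)}\le\cot(\tfrac{\pi}{2n})\,\norm{B}_1\le\cot(\tfrac{\pi}{2n})$, and combining these displays proves the bound $\min_{D}\norm{A-D}_\infty\le\cot(\tfrac{\pi}{2n})\max_{i\neq j}|a_{ij}|$.

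For sharpness I would reverse the duality. By Theorem~\ref{complex} being sharp there is a nonzero zero-diagonal Hermitian $B_0$ with $\norm{B_0}_1=\tan(\tfrac{\pi}{2n})\norm{B_0}_{(1)}$, and its construction can be taken so that all off-diagonal entries of $B_0$ have a common modulus $r>0$; then $\norm{B_0}_{(1)}=n(n-1)r$ and $\norm{B_0}_F^2=\mathrm{tr}(B_0^2)=n(n-1)r^2$. Taking $A=B_0$ and testing the duality formula above with $B=B_0/\norm{B_0}_1$ yields
\[
\min_{D\in\mathcal{D}_n}\norm{B_0-D}_\infty\ \ge\ \frac{\mathrm{tr}(B_0^2)}{\norm{B_0}_1}=\frac{n(n-1)r^2}{\tan(\tfrac{\pi}{2n})\,n(n-1)r}=r\cot(\tfrac{\pi}{2n})=\cot(\tfrac{\pi}{2n})\max_{i\neq j}|(B_0)_{ij}|,
\]
which matches the upper bound just established, so $A=B_0$ attains equality. (This is the complex analogue of the remark that $A=J_n$ is extremal for Theorem~\ref{realdual}.)

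The inequality is short once Theorem~\ref{complex} is available, so the points requiring care are: justifying the duality set-up over $\R$ --- that $\mathrm{tr}(AB)$ is positive definite on $\mathcal{H}_n$, that $\norm{\cdot}_\infty$ and $\norm{\cdot}_1$ are genuinely dual there, and that $\mathcal{D}_n^{\perp}$ is exactly the zero-diagonal Hermitian matrices --- and, for sharpness, extracting from the proof of Theorem~\ref{complex} that the extremal matrix can be chosen with all off-diagonal entries of equal modulus. I expect this last bookkeeping point to be the main obstacle, since everything else is routine linear-algebraic duality.
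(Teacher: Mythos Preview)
Your proof of the inequality is correct and is essentially the paper's approach: the paper phrases the duality via polar sets (showing $\mathcal{B}_1\cap\mathcal{D}_n^\perp\subset\cot(\tfrac{\pi}{2n})\mathcal{B}_{(1)}$ and dualising to $\mathcal{B}_{(\infty)}\subset\cot(\tfrac{\pi}{2n})\mathcal{B}_\infty+\mathcal{D}_n$), while you use the equivalent variational formula $\min_D\norm{A-D}_\infty=\max\{\langle A,B\rangle:B\in\mathcal{D}_n^\perp,\ \norm{B}_1\le1\}$, which the paper in fact also records in Remark~\ref{remrealdual}.

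The gap is in your sharpness argument. The extremal matrix for Theorem~\ref{complex} is $B_0=\mathbbm{1}\mathbbm{1}^*-\bfg{\alpha}\bfg{\alpha}^*$ with $\alpha_k=\exp(2\pi\i(k-1)/n)$, whose off-diagonal entries have moduli $|1-\alpha_i\bar{\alpha}_j|=2\sin\!\bigl(|i-j|\pi/n\bigr)$. For $n\ge4$ these moduli are \emph{not} constant (e.g.\ for $n=4$ they are $\sqrt{2}$ and $2$), so your claimed identities $\norm{B_0}_F^2=n(n-1)r^2$ and $\norm{B_0}_{(1)}=n(n-1)r$ fail, and the chain $\norm{B_0}_F^2/\norm{B_0}_1=\cot(\tfrac{\pi}{2n})\max_{i\neq j}|(B_0)_{ij}|$ does not hold. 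More generally, your argument with $A=B_0$ requires $\sum_{i\neq j}|b_{ij}|^2=(\max_{i\neq j}|b_{ij}|)\sum_{i\neq j}|b_{ij}|$, which forces every nonzero $|b_{ij}|$ to equal the maximum; there is no reason to expect an extremal $B_0$ for Theorem~\ref{complex} with this property.

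The fix is simple: do not take $A=B_0$. Instead set $a_{ij}=b_{ij}/|b_{ij}|$ for $i\neq j$ (all off-diagonals of $B_0$ are nonzero) and $a_{ii}=0$. Then $\langle A,B_0\rangle=\sum_{i\neq j}|b_{ij}|=\norm{B_0}_{(1)}$, so testing with $B=B_0/\norm{B_0}_1$ gives
\[
\min_{D\in\mathcal{D}_n}\norm{A-D}_\infty\ \ge\ \frac{\norm{B_0}_{(1)}}{\norm{B_0}_1}=\cot\!\Bigl(\frac{\pi}{2n}\Bigr)=\cot\!\Bigl(\frac{\pi}{2n}\Bigr)\max_{i\neq j}|a_{ij}|,
\]
matching the upper bound. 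This is exactly the construction the paper gives (in the remark following Theorem~\ref{complexdual}). Alternatively, the paper's own proof simply notes that every step of the duality argument is reversible, so a strictly smaller constant in Theorem~\ref{complexdual} would contradict the sharpness of Theorem~\ref{complex}.
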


The remainder of this paper is organized as follows: Section 1 is devoted to the basic definitions and known results that are used throughout. In Sections 2 and 3, we present the proofs and necessary lemmas for Theorems \ref{real} and \ref{complex}, respectively. Section 4 is devoted to the dual versions, Theorems \ref{realdual} and \ref{complexdual}.  
\section{Notation and preliminaries}

In the first part of this section, we review some of the basic concepts and results of the theory of matrix analysis for the reader's convenience and fixing notations. Detailed discussion and proofs can be found in any standard textbook on this subject, such as \cite{bhatia}, \cite{horn} and \cite{zhan}.

The  conjugate transpose of a general complex matrix $A$ is denoted by $A^*$. For two complex vectors $\bf{u}$ and $\bf{v}$ of the same dimension, the usual Euclidean inner product is denoted by $\langle u,v\rangle := \bf{u}^*\bf{v}$ and the corresponding norm by $|\bf{u}|:=\sqrt{\langle \bf{u},\bf{u}\rangle}$.

The spaces of real and complex $m\times n$ matrices are denoted by $M_{m\times n}(\R)$ and $M_{m\times n}(\C)$, respectively. The space of real symmetric matrices of order $n$ is denoted by $\mathcal{S}_n$ and the space of Hermitian matrices of order $n$ is denoted by $\mathcal{H}_n$.  The space of real diagonal matrices of order $n$ is also denoted by $\mathcal{D}_n$.

Let $A\in \mathcal{H}_n$ has eigenvalues $\lambda_1,\dots,\lambda_n$. The \textit{trace norm} (energy, nuclear norm, ...) of $A$ is defined by $\norm{A}_1 = \sum_{i=1}^n |\lambda_i|$ and the \textit{spectral norm} of $A$ is defined by $\norm{A}_\infty=\max_i |\lambda_i|$.
\footnote{These are two special cases of the Schatten $p$-norms defined  by the $L^p$-norm of the singular values of a general matrix, which is the reason for choosing the present notation.} The two other \textit{entry-wise  $L^1$-norm} and \textit{max norm} of $A$ are also defined by  the formulas $\norm{A}_{(1)}=\sum_{i,j} |A_{ij}|, \norm{A}_{(\infty)}=\max_{i,j} |A_{ij}|$. We also denote vector $(A_{11},A_{22},\dots,A_{nn})$ of the diagonal entries of $A$ by $\mathrm{diag}(A)$.

Now let
\[ A= \lambda_1 \bf{x}_1\bf{x}^*_1+\dots+\lambda_n \bf{x}_1\bf{x}^*_n,\]
be the spectral decomposition of $A$ for an orthonormal basis   
 $\bf{x}_1,\dots,\bf{x}_n$ of the eigenvectors and corresponding eigenvalues $\lambda_1,\dots,\lambda_n$. Suppose for some $k$,
 \[\lambda_1,\dots,\lambda_k\geq 0, \quad\lambda_{k+1},\dots,\lambda_n <0 .\]
 
   The \textit{positive }and \ textit{negative}parts of $A$ are defined by
\[A^+:= \sum_{i\leq k} \lambda_i \bf{x}_i\bf{x}^*_i,\quad A^-:=-\left(\sum_{i>k} \lambda_i \bf{x}_i\bf{x}^*_i\right).\]

It can be easily seen that $A^+,A^-$ are well-defined positive semi-definite matrices, and $A=A^+-A^-$.

We also use the Frobenius inner product on spaces $M_{m\times n}(\C)$. For two matrices $A,B\in M_{m\times n}(\C)$, their (Frobenius) \textit{inner product} is defined by:
\[\langle A,B\rangle := \mathrm{tr}(A^*B)=\sum_{i,j} \bar{A_{ij}}B_{ij}.\]
It can be verified that this yields a real inner product on $\mathcal{H}_n$.

We require two basic properties of the Frobenius inner product:
\begin{itemize}
\item For every unitary matrix $U$ and square matrices $A$ and $B$, all of the same order,
\[\langle A,B\rangle = \langle U^*AU,U^*BU\rangle.\]
\item The inner product of two positive semi-definite matrices of the same order, is nonnegative.
\end{itemize}

For two matrices $A,B\in M_{m\times n}(\C)$, the \textit{Hadamard product} $A\circ B$ is a matrix of the same dimension $m\times n$, with elements given by
\[(A\circ B)_{ij} = A_{ij}B_{ij}.\] 
The following lemma is known as the ``Schur product theorem" (\cite{horn}, Theorem 7.5.3): 

\begin{lemma}\label{plem}
The Hadamard product of two positive semi-definite matrices is positive semi-definite.
\end{lemma}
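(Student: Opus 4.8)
The plan is to reduce the statement to the rank-one case by means of the spectral decomposition. Let $A,B\in\mathcal{H}_n$ be positive semi-definite. First I would write $B$ in its spectral decomposition $B=\sum_{k=1}^{n}\mu_k \bf{v}_k\bf{v}_k^*$, where $\mu_k\geq 0$ for all $k$ because $B$ is positive semi-definite. Since the Hadamard product is bilinear in its two arguments, this gives
\[ A\circ B = \sum_{k=1}^{n}\mu_k\,\bigl(A\circ \bf{v}_k\bf{v}_k^*\bigr). \]
A nonnegative linear combination of positive semi-definite matrices is positive semi-definite, so it suffices to prove that $A\circ(\bf{v}\bf{v}^*)$ is positive semi-definite for every $\bf{v}\in\C^n$.

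Second, I would establish the congruence identity
\[ A\circ(\bf{v}\bf{v}^*) = D A D^*, \qquad D:=\diag(v_1,\dots,v_n), \]
which follows at once from comparing $(i,j)$-entries: the left-hand side is $A_{ij}v_i\bar{v_j}$ and the right-hand side is $v_i A_{ij}\bar{v_j}$. Now for any $\bf{z}\in\C^n$,
\[ \bf{z}^*(D A D^*)\bf{z} = (D^*\bf{z})^* A (D^*\bf{z}) \geq 0, \]
since $A$ is positive semi-definite; hence $A\circ(\bf{v}\bf{v}^*)=D A D^*$ is positive semi-definite. Combined with the first step, this shows that $A\circ B$ is positive semi-definite.

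As an alternative route, one may instead decompose both factors, $A=\sum_i\lambda_i \bf{x}_i\bf{x}_i^*$ and $B=\sum_j\mu_j \bf{y}_j\bf{y}_j^*$ with $\lambda_i,\mu_j\geq 0$, and use the elementary identity $(\bf{u}\bf{u}^*)\circ(\bf{w}\bf{w}^*)=(\bf{u}\circ\bf{w})(\bf{u}\circ\bf{w})^*$ to write
\[ A\circ B = \sum_{i,j}\lambda_i\mu_j\,(\bf{x}_i\circ\bf{y}_j)(\bf{x}_i\circ\bf{y}_j)^*, \]
which is manifestly a nonnegative combination of rank-one positive semi-definite matrices. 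Either way there is no genuine obstacle here; the only points requiring care are the bookkeeping of the Hadamard-product identities and the standard fact that a congruence $X\mapsto DXD^*$ preserves positive semi-definiteness.
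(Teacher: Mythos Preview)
Your proof is correct: both the congruence argument $A\circ(\bf{v}\bf{v}^*)=DAD^*$ and the double-decomposition identity $(\bf{u}\bf{u}^*)\circ(\bf{w}\bf{w}^*)=(\bf{u}\circ\bf{w})(\bf{u}\circ\bf{w})^*$ are standard and valid, and either yields the result. Note, however, that the paper does not actually supply its own proof of this lemma; it merely records it as the Schur product theorem with a citation to a textbook, so there is no in-paper argument to compare against.
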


In the second part of this section, we review the ``duality transformation" on convex sets.  For more information on this subject, the reader can consult 
\cite{matousek} (for the duality transformation in general ) and \cite{zhan} (for dual norms).

Let $(V,\langle\cdot,\cdot\rangle)$ be a real inner product space of finite dimension. For every subset $X$ of $V$, we define the \textit{dual} (or polar) of $X$ by,
\[X^* = \{y\in V: \langle y,x\rangle \leq 1, \forall x\in X\}.\]

We denote the convex hull and closure of set $X$ by $\mathrm{conv}(X)$ and $\mathrm{cl}(X)$, respectively. In the next lemma, we list some of the basic properties of the duality transform required in this study.

\begin{lemma}\label{plem2} Let $V$ be a finite dimensional real inner product space, and $X,Y$ be subsets of $V$.
\begin{enumerate}
\item $X^*$ is a closed convex set and contains $0$.
\item If $X\subset Y$, then $Y^*\subset X^*$.
\item $X^*=\mathrm{conv}(X)^*=\mathrm{cl}(X)^*$.
\item $(X\cup Y)^*$= $X^*\cap Y^*$.
\item Let $W$ be a linear subspace of $V$. Subsequently, $W^*=W^{\perp}$ for the orthogonal complement $W^\perp$ of $W$. 
\item If $X$ is closed convex and contains $0$, then $(X^*)^*=X$.
\item If $X$ and $Y$ are closed convex and contain $0$, and $Z=\mathrm{cl}(\mathrm{conv}(X^*\cup Y^*))$, then
\[(X\cap Y)^* = Z.\]
\end{enumerate}
\end{lemma}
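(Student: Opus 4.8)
The plan is to prove the seven items essentially in the order stated, since the later ones lean on the earlier ones. Items (1)--(5) are immediate from the definition $X^*=\{y:\langle y,x\rangle\le 1,\ \forall x\in X\}$ once one unwinds the quantifiers; the only substantive analytic ingredient in the whole lemma is the separating hyperplane theorem for closed convex sets in finite dimension, which enters only in the bipolar identity (6). Item (7) is then a purely formal consequence of (3), (4) and (6).

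For (1), write $X^*=\bigcap_{x\in X}\{y:\langle y,x\rangle\le 1\}$ (interpreting the empty intersection as $V$): each half-space is closed, convex and contains $0$, and these properties are preserved under intersection. For (2), enlarging $X$ only adds constraints in the definition of the polar, so the polar can only shrink. For (3), the inclusions $\mathrm{conv}(X)^*\subset X^*$ and $\mathrm{cl}(X)^*\subset X^*$ follow from (2); conversely, if $y\in X^*$ then $X$ lies in the closed convex set $\{x:\langle y,x\rangle\le 1\}$, which therefore also contains $\mathrm{cl}(\mathrm{conv}(X))$, giving the reverse inclusions. For (4), $\langle y,\cdot\rangle\le 1$ on $X\cup Y$ precisely when it holds on $X$ and on $Y$. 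For (5), $W^{\perp}\subset W^*$ is clear, and if $y\in W^*$ then $t\langle y,w\rangle\le 1$ for every $t\in\R$ and $w\in W$ (because $tw\in W$), forcing $\langle y,w\rangle=0$, so $W^*\subset W^{\perp}$.

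The heart of the matter is (6). The inclusion $X\subset (X^*)^*$ holds for an arbitrary set: if $x\in X$ and $y\in X^*$, then $\langle x,y\rangle=\langle y,x\rangle\le 1$, so $x\in(X^*)^*$. For the reverse inclusion, suppose $z\notin X$; since $X$ is closed and convex and $V$ is finite dimensional, the separation theorem provides $a\in V$ and $c\in\R$ with $\langle a,x\rangle\le c<\langle a,z\rangle$ for all $x\in X$. As $0\in X$ we get $c\ge 0$, and if $c=0$ we may replace $c$ by any number in $(0,\langle a,z\rangle)$, so we may assume $c>0$; then $a/c\in X^*$ while $\langle a/c,z\rangle>1$, so $z\notin(X^*)^*$. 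Finally, for (7), note that $Z=\mathrm{cl}(\mathrm{conv}(X^*\cup Y^*))$ is closed, convex and contains $0$ (by (1), $X^*$ already contains $0$). From $X\cap Y\subset X$, $X\cap Y\subset Y$ and (2) one gets $X^*\cup Y^*\subset(X\cap Y)^*$; since the right-hand side is closed and convex by (1), it contains $Z$. For the opposite inclusion I compute the polar of $Z$, using (3), then (4), then (6) (legitimate because $X$ and $Y$ are closed convex and contain $0$):
\[Z^*=(X^*\cup Y^*)^*=(X^*)^*\cap(Y^*)^*=X\cap Y.\]
Taking polars again via (6), now applied to the closed convex $0$-containing set $Z$, yields $Z=(Z^*)^*=(X\cap Y)^*$, as desired. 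The only point in the argument that is not bookkeeping with the definition of the polar is the separation step inside (6), and the only delicate detail there is the degenerate case $c=0$, which is dispatched by the perturbation of $c$ indicated above.
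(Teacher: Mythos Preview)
Your proof is correct and follows essentially the same route as the paper: items (1)--(5) are dispatched directly from the definition, (6) is obtained from the separating hyperplane theorem, and (7) is derived by computing $Z^*=(X^*\cup Y^*)^*=(X^*)^*\cap(Y^*)^*=X\cap Y$ and then taking polars again via (6). The paper merely cites (6) as standard and gives the same chain of equalities for (7), so your argument is a more detailed version of the paper's own proof rather than a different approach.
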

\begin{proof}
The statements 1-5, are immediate consequences of the definition, and 6 is standard (cf. \cite{matousek},  Section 5.1).

For 7, first by assumption we have $X=(X^*)^*$ and $Y=(Y^*)^*$ and by double use of 3, we have,
\[Z^*= (X^*\cup Y^*)^*= (X^*)^*\cap (Y^*)^*
= X\cap Y.\]
Now $Z$ is closed convex and contains $0$ and so is equal to its second dual, which gives,
\[(X\cap Y)^*= (Z^*)^*=Z.\]
\end{proof}

For every norm $\norm{\cdot}$ on $V$, its \textit{dual norm} $\norm{\cdot}'$ is defined by
\[\norm{v}' = \max \{\langle v,w\rangle, \norm{w}\leq 1\}.\]  
Equivalently, the unit balls of $\norm{\cdot}$ and $\norm{\cdot}'$ are dual.

In the spaces $\mathcal{H}_n$ and $\mathcal{S}_n$ with the Frobenius inner product (which is the only inner product we consider in this study), the dual norms of $\norm{\cdot}_1$ and $\norm{\cdot}_{(1)}$ are $\norm{\cdot}_\infty$ and $\norm{\cdot}_{(\infty)}$ respectively (cf. \cite{zhan}, Section 4.3).

\section{Real symmetric matrices}

In this section, we prove Theorem \ref{real} by using two lemmas.

\begin{lemma}\label{lem1}
Let $d>0$ and  $a,b\in \R$ with $|a|,|b|\leq d$. Then
\[|a-b| \leq \left( d-\frac{ab}{d}\right) .\] 
\end{lemma}

\begin{proof}
We have
\[ \left(d-\frac{ab}{d}\right)-(a-b)=\frac{1}{d}(d+b)(d-a)\geq 0, \]
Thus, $(d-\frac{ab}{d})\geq(a-b)$ and similarly $(d-\frac{ab}{d})\geq(b-a)$.
\end{proof}

\begin{proposition}\label{prop1}
Let $A,B$ be two real positive semi-definite matrices of order $n$, with 
\[\mathrm{diag}(A)=\mathrm{diag}(B)=(d_1,\dots,d_n).\]
Then
\[\sum_{i,j} |A_{ij}-B_{ij}|\leq \left( \sum_{i=1}^n \sqrt{d_i} \right)^2.\]
\end{proposition}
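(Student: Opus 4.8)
The plan is to estimate each summand $|A_{ij}-B_{ij}|$ of the left-hand side individually via Lemma \ref{lem1}, add these estimates up, and then show that the cross-term thereby produced is nonnegative by invoking the Schur product theorem.

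First I would reduce to the nondegenerate case $d_i>0$ for all $i$. If some $d_i=0$, then positive semi-definiteness of $A$ (look at the $2\times2$ principal minor on rows and columns $i$ and $j$, whose determinant is $-A_{ij}^2\geq 0$) forces $A_{ij}=A_{ji}=0$ for every $j$, and likewise for $B$; hence these indices contribute nothing to either side, and we may pass to the principal submatrices indexed by $\{\,i : d_i>0\,\}$, which are again positive semi-definite with the same diagonal and the same value of $\sum_i\sqrt{d_i}$.

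Next, for each pair $(i,j)$ the $2\times2$ principal submatrix of $A$ gives $|A_{ij}|\leq\sqrt{d_id_j}$, and similarly $|B_{ij}|\leq\sqrt{d_id_j}$. Applying Lemma \ref{lem1} with $d=\sqrt{d_id_j}$, $a=A_{ij}$, $b=B_{ij}$ yields
\[ |A_{ij}-B_{ij}| \;\leq\; \sqrt{d_id_j}-\frac{A_{ij}B_{ij}}{\sqrt{d_id_j}}. \]
Summing over all $i,j$ and using $\sum_{i,j}\sqrt{d_id_j}=\big(\sum_i\sqrt{d_i}\big)^2$, I am left with
\[ \sum_{i,j}|A_{ij}-B_{ij}| \;\leq\; \Big(\sum_{i=1}^n\sqrt{d_i}\Big)^2-\sum_{i,j}\frac{A_{ij}B_{ij}}{\sqrt{d_id_j}}. \]

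The remaining, and essentially only nontrivial, point is that the last sum is nonnegative. I would observe that, writing $v=(1/\sqrt{d_1},\dots,1/\sqrt{d_n})$, the rank-one matrix $vv^*$ is positive semi-definite with entries $(vv^*)_{ij}=1/\sqrt{d_id_j}$, so that in the Frobenius inner product
\[ \sum_{i,j}\frac{A_{ij}B_{ij}}{\sqrt{d_id_j}} = \langle A\circ B,\, vv^*\rangle. \]
By the Schur product theorem (Lemma \ref{plem}), $A\circ B$ is positive semi-definite, and the Frobenius inner product of two positive semi-definite matrices is nonnegative; hence this term is $\geq 0$, which completes the argument. The main obstacle is spotting that the cross term admits this positive-semi-definite interpretation — once that is in place, the rest is the routine termwise estimate above.
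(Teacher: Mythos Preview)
Your proof is correct and follows essentially the same route as the paper's: bound each $|A_{ij}-B_{ij}|$ via Lemma~\ref{lem1}, sum, and then kill the cross term using the Schur product theorem. The only cosmetic differences are that the paper handles indices with $d_i=0$ by setting $e_i=0$ rather than deleting them, and it reads off nonnegativity of the cross term directly as the quadratic form $\mathbf{e}^T(A\circ B)\mathbf{e}\ge 0$ instead of phrasing it as $\langle A\circ B,\,vv^*\rangle\ge 0$; these are the same computation.
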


\begin{proof}
$A$ and $B$ are positive semi-definite, so all of their $2\times 2$ principal minors are nonnegative, i.e. for all $i,j$ we have
\[d_id_j - (A_{ij})^2 = A_{ii}A_{jj}-(A_{ij})^2 \geq 0 \Longrightarrow\ |A_{ij}|\leq \sqrt{d_id_j}.\]
Similarly $|B_{ij}|\leq \sqrt{d_id_j}$.

Let $C=A\circ B$ and define the vector $\bf{e}=(e_1,\dots,e_n)$ by the equations

\[e_i = \left\{ \begin{array}{ll} \frac{1}{\sqrt{d_i}} & d_i>0 \\ 0 & d_i=0 \end{array}\right. .\]

For every $i\neq j$, if $d_id_j=0$,
$ A_{ij}=B_{ij}=C_{ij}=0,$
and if $d_id_j>0$, $|A_{ij}|,|B_{ij}|\leq \sqrt{d_id_j}$. So by Lemma \ref{lem1}, for all $i,j$,
\begin{equation}
|A_{ij}-B_{ij}|\leq \left(\sqrt{d_id_j} - e_ie_jC_{ij} \right).
\end{equation}.

Summing up the above inequalities, we obtain
\begin{align}
\sum_{i,j}|A_{ij}-B_{ij}| & 
\leq \left( \sum_i\sqrt{d_i}\right)^2- \sum_{i,j} e_ie_jC_{ij} \nonumber\\
&= \left( \sum_i\sqrt{d_i}\right)^2-\bf{e}^TC\bf{e}.
\end{align}

By Lemma \ref{plem}, $C$ is also positive semi-definite, hence \begin{equation}
\bf{e}^TC\bf{e}\geq 0.
\end{equation}
This completes the proof.  
\end{proof}

\begin{proof}[Proof of Theorem \ref{real}]
We have $A=A^+-A^-$ and $\mathrm{diag}(A)=0$, so for a vector $(d_1,\dots,d_n)\in\R^n$,
\[\mathrm{diag}(A^+)=\mathrm{diag}(A^-)=(d_1,\dots,d_n).\]

Let $\lambda_1,\dots,\lambda_k\geq 0$ be all non-negative eigenvalues and $\lambda_{k+1},\dots,\lambda_n$ all negative eigenvalues of $A$. Therefore
\begin{equation}
\norm{A}_1=\sum_{i\leq k} \lambda_i -\sum_{i>k} \lambda_i = \mathrm{tr}(A^+)+\mathrm{tr}(A^-) 
= 2 \sum_i d_i.
\end{equation}

Now by Proposition \ref{prop1},
\begin{equation}
\sum_{i,j} |A_{ij}| = \sum_{i,j} |A^+_{ij}-A^-_{ij}| 
\leq \left( \sum_i \sqrt{d_i}\right)^2 \leq n \sum_i d_i = \frac{n}{2}\norm{A}_1.
\end{equation}

For an equality case, consider the matrix $A=J_n-I_n$ for the all-one matrix $J_n$ and the identity matrix $I_n$, both of order $n>1$.  The eigenvalues of $A$ are $n-1$ and $-1$ with multiplicity $n-1$. Hence $\frac{\norm{A}_1}{\norm{A}_{(1)}}=\frac{2(n-1)}{n(n-1)}=\frac{2}{n}$.
\end{proof}

\begin{remark}\label{remreal}
For more general example of matrices $A$, which give equality in Theorem \ref{real}, consider the matrix, 
\[ A=\mathbbm{1}\mathbbm{1}^*-B,\]
for $\mathbbm{1}=(1,1,\dots,1)\in\R^n$ and arbitrary positive semi-definite matrix $B\in\mathcal{S}_n$, with condition
\[B\mathbbm{1}=0, \quad \diag(B)=(1,1,\dots,1).\]
 
Then $\langle \mathbbm{1}\mathbbm{1}^*,B\rangle=0$, and $A^+=\mathbbm{1}\mathbbm{1}^*, A^-=B$. So we have, $\diag(A)=0, \norm{A}_1=2\mathrm{tr}(A^+)=2n$ and because $|B_{ij}|\leq 1$ for all $i,j$ (by $B\geq 0$),   
\[\norm{A}_{(1)} = \sum_{i,j} |1-B_{ij}| = n^2-\sum_{i,j} B_{ij}=n^2.\]
Therefore $\frac{\norm{A}_1}{\norm{A}_{(1)}}=\frac{2}{n}$.

Also it can be easily seen that, for every $d\neq 0$ and every diagonal matrix $D$ with diagonal entries in $\{\pm 1\}$, the matrix $d DAD$ is also an example of equality in Theorem \ref{real}. These matrices constitute the set of all zero diagonal matrices in $\mathcal{S}_n$ which have only one positive or negative eigenvalue with the corresponding eigenvector in the set $\{\pm 1\}^n$. It seems that, these are all of the nonzero equality cases, although we don't have a proof.  
\end{remark}

\section{Hermitian matrices}

In the previous section, we observed that the minimum of $\frac{\norm{A}_{1}}{\norm{A}_{(1)}}$ over all nonzero real symmetric  matrices $A$ of order $n$ with zero entries on the diagonal, is equal to $\frac{2}{n}$. In this section, we present a proof of Theorem \ref{complex}, which provides an answer to the similar problem for Hermitian matrices. It turns out that for the constant $\gamma_n$ defined below, the answer is equal to $\frac{1}{\gamma_n}\cdot\frac{2}{n}$.

We define:
\begin{equation}
\gamma_n := \frac{2}{n} \sum_{k=0}^{n-1} \sin(\frac{k\pi}{n}).
\end{equation}

For $n\geq 2$, there is a simpler formula for $\gamma_n$:
\begin{align}
\gamma_n  &= \frac{2}{n}\, \Im \left( \sum_{k=0}^{n-1} \exp(\i\frac{k\pi}{n})\right) 
= \frac{2}{n}\, \Im \left( \frac{1-\exp(\i\pi)}{1-\exp(\frac{\i\pi}{n})}\right) \nonumber\\
&= \frac{4}{n} \cdot\frac{\sin(\frac{\pi}{n})}{2-2\cos(\frac{\pi}{n})} 
= \frac{2}{n} \cdot\frac{2\sin(\frac{\pi}{2n})\cos(\frac{\pi}{2n})}{2\sin^2(\frac{\pi}{2n})} \nonumber\\
&= \frac{2}{n\tan(\frac{\pi}{2n})}.
\end{align}

and because $\frac{\tan(x)}{x}$ is increasing in the interval $[0,\frac{\pi}{2})$ and $\lim_{x\to 0}\frac{\tan(x)}{x}=1$,
\begin{equation}\label{gamma}
\gamma_1=0\leq \gamma_2=1 \leq \gamma_3 \leq \dots, \quad \lim_{n\to \infty} \gamma_n = \frac{4}{\pi}.
\end{equation}

\begin{lemma}\label{clem1}
Let $d>0$ and  $a,b\in \C$ with $|a|,|b|\leq d$. Then
\[|a-b| \leq \left| d-\frac{\bar{a}b}{d}\right| .\] 
\end{lemma}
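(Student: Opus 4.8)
# Proof Proposal for Lemma \ref{clem1}

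My plan is to reduce the complex inequality to a real one by exploiting rotational freedom. The key observation is that both sides of the claimed inequality $|a-b| \leq |d - \bar{a}b/d|$ are invariant under the substitution $a \mapsto e^{\i\theta}a$, $b \mapsto e^{\i\theta}b$: the left side because $|e^{\i\theta}a - e^{\i\theta}b| = |a-b|$, and the right side because $\overline{e^{\i\theta}a}\,(e^{\i\theta}b) = \bar{a}b$ is unchanged. So without loss of generality I may rotate so that $a$ is a nonnegative real number, i.e. $a \in [0,d]$.

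With $a = \alpha \geq 0$ real, write $b = \beta_1 + \i\beta_2$ with $\beta_1^2 + \beta_2^2 = |b|^2 \leq d^2$. Then I compute both squared quantities explicitly. The left side is $|a-b|^2 = (\alpha - \beta_1)^2 + \beta_2^2$. The right side is $|d - \alpha b/d|^2 = (d - \alpha\beta_1/d)^2 + (\alpha\beta_2/d)^2$. Subtracting, the $\beta_2^2$ terms contribute $\beta_2^2(\alpha^2/d^2 - 1) = -\beta_2^2(d^2-\alpha^2)/d^2 \leq 0$ to the difference (right minus left), which helps us, while the real parts contribute $(d - \alpha\beta_1/d)^2 - (\alpha-\beta_1)^2$. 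I expand this last difference as a difference of squares: it factors as $\bigl(d - \alpha\beta_1/d + \alpha - \beta_1\bigr)\bigl(d - \alpha\beta_1/d - \alpha + \beta_1\bigr)$, and each factor can be rearranged into the form $\frac{1}{d}(d \pm \alpha)(d \mp \beta_1)$ — exactly the structure appearing in the proof of Lemma \ref{lem1}. Since $|\alpha| \leq d$ and $|\beta_1| \leq |b| \leq d$, both factors are nonnegative, so the real-part contribution to (right minus left) is nonnegative.

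Putting the two pieces together: (right side)$^2$ $-$ (left side)$^2$ equals a nonnegative real-part contribution minus the nonnegative quantity $\beta_2^2(d^2-\alpha^2)/d^2$, so I cannot simply add them. The cleaner route is to bound directly: the right side squared equals $\frac{1}{d^2}\bigl[(d^2 - \alpha\beta_1)^2 + \alpha^2\beta_2^2\bigr]$ and the left side squared is $(\alpha-\beta_1)^2 + \beta_2^2$; I want to show $\frac{1}{d^2}\bigl[(d^2-\alpha\beta_1)^2 + \alpha^2\beta_2^2\bigr] \geq (\alpha-\beta_1)^2+\beta_2^2$. Multiplying through by $d^2$ and moving everything to one side, the $\beta_2^2$ terms give $(\alpha^2 - d^2)\beta_2^2$ and the remaining terms give $(d^2-\alpha\beta_1)^2 - d^2(\alpha-\beta_1)^2$. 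I expand the latter and regroup using $\beta_1^2 \leq d^2 - \beta_2^2$ (from $|b| \leq d$) to absorb the negative $\beta_2^2$ term; after simplification the whole expression becomes $(d^2-\alpha^2)(d^2 - \beta_1^2 - \beta_2^2) \geq 0$, which holds since $\alpha \leq d$ and $\beta_1^2+\beta_2^2 = |b|^2 \leq d^2$.

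The main obstacle is purely bookkeeping: keeping the algebra organized so that the final factorization as $(d^2-\alpha^2)(d^2-|b|^2)$ emerges cleanly rather than getting lost in cross terms. The conceptual content is entirely in the opening rotation-invariance reduction, which turns a genuinely two-complex-variable statement into a one-real-plus-one-complex computation; everything after that is a mechanical verification that mirrors Lemma \ref{lem1}.
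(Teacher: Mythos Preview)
Your proof is correct and ultimately arrives at the same factorization as the paper, namely
\[
\left|d-\frac{\bar a b}{d}\right|^2-|a-b|^2=\frac{1}{d^2}(d^2-|a|^2)(d^2-|b|^2)\ge 0,
\]
but the route is more circuitous than necessary. The paper's proof is a three-line direct computation: expand $|d-\bar a b/d|^2=d^2+|a|^2|b|^2/d^2-\bar a b-a\bar b$ and $|a-b|^2=|a|^2+|b|^2-\bar a b-a\bar b$, observe that the cross terms $-\bar a b-a\bar b$ cancel, and factor what remains. Your rotation to make $a$ real is valid but buys nothing here, since the identity above is already coordinate-free; all the rotation does is rename $|a|^2$ as $\alpha^2$ and $|b|^2$ as $\beta_1^2+\beta_2^2$, after which you redo the same algebra in real coordinates. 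The mid-proposal detour through a difference-of-squares factoring that ``cannot simply be added'' is a symptom of this: working directly with $|z|^2=z\bar z$ makes the cancellation visible immediately and avoids the bookkeeping you flag as the main obstacle.
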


\begin{proof}
We have

\begin{align*}
\left| d-\frac{\bar{a}b}{d}\right|^2-|a-b|^2 &=
\left(d^2+\frac{|a|^2|b|^2}{d^2}-\bar{a}b-a\bar{b}\right)   - \left(|a|^2+|b|^2-\bar{a}b-a\bar{b}\right) \\
&= \frac{1}{d^2}(d^2-|a|^2)(d^2-|b|^2)\geq 0.
\end{align*}
Hence, $|a-b|^2 \leq \left| d-\frac{\bar{a}b}{d}\right|^2$ as required.
\end{proof}


Hereafter, we use notation $\S^1=\{z\in \C: |z|=1\}$.
\begin{lemma}\label{clem2}
Let $d_1,\dots,d_n \geq 0$ and $\omega_1,\dots,\omega_n\in \S^1$. Then
\[\sum_{i,j} d_id_j |\omega_i-\omega_j| \,\leq\, \gamma_n \left( \sum_i d_i\right)^2,\]
and equality holds if $\{\omega_1,\dots,\omega_n\}$ is the set of $n$-th roots of unity and {$d_1=d_2=\dots=d_n$}.
\end{lemma}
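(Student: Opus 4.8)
The plan is to normalize $\sum_i d_i=1$ (both sides are homogeneous of degree $2$ and the right side depends only on $\sum_i d_i$) and prove $\sum_{i,j}d_id_j|\omega_i-\omega_j|\le\gamma_n$. If two of the $\omega_i$ coincide, merging them and adding their weights leaves the left side unchanged and reduces $n$, so by induction on $n$ (using $\gamma_m\le\gamma_n$ for $m\le n$, see \eqref{gamma}) I may assume $\omega_1,\dots,\omega_n$ are distinct; write $\omega_k=e^{\i\theta_k}$ with $\theta_1<\cdots<\theta_n<\theta_1+2\pi$ and let $\ell_k=\theta_{k+1}-\theta_k>0$ be the consecutive arc lengths (indices mod $n$, $\theta_{n+1}:=\theta_1+2\pi$), so $\sum_k\ell_k=2\pi$.

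The first step is to replace the given weights by the \emph{worst} ones. Since $\S^1\subset\R^2$ and the Euclidean distance is conditionally negative definite (Schoenberg; equivalently the Fourier series $|e^{\i\alpha}-e^{\i\beta}|=\tfrac4\pi-\tfrac8\pi\sum_{k\ge1}\tfrac{\cos k(\alpha-\beta)}{4k^2-1}$ has nonpositive coefficients at every $k\ne0$), the matrix $C:=[\,|\omega_i-\omega_j|\,]$ satisfies $v^{T}Cv\le0$ whenever $\sum_iv_i=0$, and strictly so for distinct $\omega_i$ (a Vandermonde argument). Hence $d\mapsto d^{T}Cd$ is strictly concave on $\Delta:=\{d\ge0:\sum_id_i=1\}$, so it has a unique maximizer $d^{*}$, and it suffices to bound $d^{*T}Cd^{*}$. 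Solving the first-order (Lagrange) conditions — and checking the resulting weights are positive, so $d^{*}$ is interior — amounts to producing a point of $\Delta$ with
\[\psi(\theta_i)=\lambda\ \ (1\le i\le n),\qquad\psi(\phi):=\sum_j d^{*}_j\,\bigl|e^{\i\phi}-\omega_j\bigr|=\sum_j 2d^{*}_j\Bigl|\sin\tfrac{\phi-\theta_j}{2}\Bigr|,\]
for a constant $\lambda$; by concavity any such point is the global maximizer, and then $d^{*T}Cd^{*}=\sum_i d^{*}_i\psi(\theta_i)=\lambda$.

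Next I read $\lambda$ off the structure of $\psi$. On each open arc $(\theta_j,\theta_{j+1})$ every summand of $\psi$ keeps a fixed sign, so $\psi''=-\tfrac14\psi$ there; at $\theta_j$ only the term $2d^{*}_j|\sin\tfrac{\phi-\theta_j}{2}|$ has a corner, contributing a jump $\psi'(\theta_j^{+})-\psi'(\theta_j^{-})=2d^{*}_j$. On $[\theta_j,\theta_{j+1}]$ the solution of $\psi''=-\tfrac14\psi$ with boundary values $\psi(\theta_j)=\psi(\theta_{j+1})=\lambda$ is $\psi(\phi)=\lambda\cos\!\bigl(\tfrac{\phi-\theta_j}{2}-\tfrac{\ell_j}{4}\bigr)\big/\cos\tfrac{\ell_j}{4}$, which gives $\psi'(\theta_j^{+})=\tfrac\lambda2\tan\tfrac{\ell_j}{4}$ and $\psi'(\theta_j^{-})=-\tfrac\lambda2\tan\tfrac{\ell_{j-1}}{4}$. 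Matching the jump yields $d^{*}_j=\tfrac\lambda4\bigl(\tan\tfrac{\ell_{j-1}}{4}+\tan\tfrac{\ell_j}{4}\bigr)$, and summing with $\sum_jd^{*}_j=1$ gives $1=\tfrac\lambda2\sum_{j=1}^n\tan\tfrac{\ell_j}{4}$, i.e.
\[d^{*T}Cd^{*}=\lambda=\frac{2}{\sum_{j=1}^n\tan(\ell_j/4)}.\]
Since $\tan$ is convex on $[0,\tfrac\pi2)$ and $\sum_j\tfrac{\ell_j}{4}=\tfrac\pi2$, Jensen gives $\sum_j\tan\tfrac{\ell_j}{4}\ge n\tan\tfrac{\pi}{2n}$, hence $\lambda\le\dfrac{2}{n\tan(\pi/2n)}=\gamma_n$, which is the claim. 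Strict convexity of $\tan$ shows equality forces $\ell_1=\cdots=\ell_n=2\pi/n$, whereupon all $d^{*}_j$ are equal — the uniform distribution on the $n$th roots of unity, as in the statement.

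The step that needs the most care is the passage to $d^{*}$: one must carefully justify that $|\cdot-\cdot|$ on $\S^1$ is conditionally negative definite and strictly definite on $\{v:\sum_iv_i=0\}$ for distinct $\omega_i$, so that $d\mapsto d^{T}Cd$ is strictly concave with a unique interior maximizer obeying the displayed first-order condition; and one must confirm that the piecewise-cosine function assembled above really equals the potential $\psi$ of the weights $d^{*}$ — this holds because both it and $\sum_j 2d^{*}_j|\sin\tfrac{\cdot-\theta_j}{2}|$ are continuous, $2\pi$-periodic, and satisfy $\chi''+\tfrac14\chi=2\sum_jd^{*}_j\delta_{\theta_j}$, while the homogeneous equation $\chi''+\tfrac14\chi=0$ has no nonzero $2\pi$-periodic solution. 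Everything after that — solving a linear ODE on each arc and one application of Jensen — is routine.
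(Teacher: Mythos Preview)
Your proof is correct and follows a genuinely different route from the paper's. The paper argues by compactness that a joint maximizer $(\tilde{\bf{d}},\tilde{\bfg{\omega}})$ exists, then imposes first-order conditions in \emph{both} the $\bf{d}$- and $\bfg{\omega}$-directions: the $\bf{d}$-gradient gives $A(\tilde{\bfg{\omega}})\tilde{\bf{d}}=\tfrac{\lambda}{2}\mathbbm{1}$ and the $\bfg{\omega}$-gradient a companion cosine relation; these are packaged into one complex identity which forces $\tilde d_1=\cdots=\tilde d_n$ and reduces to an eigenvector problem for the sign matrix $E=[\mathrm{sgn}(i-j)]$, whose spectrum is computed explicitly to pin down $\tilde{\bfg{\omega}}=\bfg{\alpha}$. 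You instead freeze the points and optimize only over $\bf{d}$: conditional negative definiteness of the chordal distance on $\S^1$ makes $\bf{d}\mapsto\bf{d}^TC\bf{d}$ strictly concave on the simplex, so the maximum is the unique interior critical point; interpreting the Lagrange condition $C\bf{d}^*=\lambda\mathbbm{1}$ as a second-order ODE for the potential $\psi$ yields the closed form $\max_{\bf{d}}\bf{d}^TC\bf{d}=2/\sum_j\tan(\ell_j/4)$, after which one application of Jensen to $\tan$ finishes the $\bfg{\omega}$-optimization. Your approach produces the extra information of an exact formula for the inner maximum at every point configuration and ends with a one-line convexity step; the paper's approach is more self-contained (no Schoenberg/Fourier input is needed) and pins down the extremal configuration by direct linear algebra rather than through the ODE. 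The two points you flag as delicate---strictness of the concavity for distinct $\omega_i$, and the identification of the piecewise cosine with the actual potential via uniqueness of $2\pi$-periodic solutions of $\chi''+\tfrac14\chi=0$---are both sound as you describe them.
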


Because the proof is rather long, we postpone it until Section \ref{clem2sec}.

\begin{corollary}\label{cor1}
For every vector $\bf{x}=(x_1,\dots,x_n)\in \C^n$,
\[\sum_{i,j} \left| |x_ix_j| - x_i\bar{x_j}\right| \leq \gamma_n \left(\sum_i|x_i| \right)^2.\]
\end{corollary}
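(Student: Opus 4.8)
The plan is to derive the corollary from Lemma \ref{clem2} by writing each $x_i$ in polar form and separating magnitude from phase. First I would dispose of the trivial indices: if $x_i = 0$ then the $i$-th term on the left vanishes and $x_i$ contributes nothing on the right, so we may assume all $x_i \neq 0$ after discarding the zero coordinates (this only shrinks both sides consistently, or rather leaves the inequality for the reduced vector, which is what we need). For the nonzero coordinates write $x_i = d_i \omega_i$ with $d_i = |x_i| > 0$ and $\omega_i = x_i / |x_i| \in \S^1$.

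The key computation is the identity, for each pair $(i,j)$,
\[
\bigl| \,|x_i x_j| - x_i \bar{x_j}\, \bigr| = \bigl| \, d_i d_j - d_i \omega_i \cdot d_j \bar{\omega_j}\,\bigr| = d_i d_j \, \bigl| 1 - \omega_i \bar{\omega_j}\bigr| = d_i d_j \, |\omega_i - \omega_j|,
\]
where the last step uses $|1 - \omega_i\bar{\omega_j}| = |\bar{\omega_j}|\,|\omega_j - \omega_i| = |\omega_i - \omega_j|$ since $|\omega_j| = 1$. Summing over all $i,j$ and applying Lemma \ref{clem2} to the data $d_1,\dots,d_n \geq 0$ and $\omega_1,\dots,\omega_n \in \S^1$ gives
\[
\sum_{i,j} \bigl| \,|x_i x_j| - x_i \bar{x_j}\, \bigr| = \sum_{i,j} d_i d_j\, |\omega_i - \omega_j| \leq \gamma_n \Bigl( \sum_i d_i \Bigr)^2 = \gamma_n \Bigl( \sum_i |x_i| \Bigr)^2,
\]
which is exactly the claimed bound.

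There is essentially no obstacle here: the corollary is a direct reformulation of Lemma \ref{clem2} under the substitution $x_i = d_i\omega_i$, and all the real work — the actual bound $\gamma_n(\sum d_i)^2$ and its sharpness — is deferred to the proof of Lemma \ref{clem2} in Section \ref{clem2sec}. The only points requiring a word of care are the handling of zero coordinates (so that $\omega_i$ is well defined) and the modulus manipulation $|1-\omega_i\bar\omega_j| = |\omega_i-\omega_j|$, both of which are routine.
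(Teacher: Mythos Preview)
Your proposal is correct and follows essentially the same route as the paper's proof: write $x_i=d_i\omega_i$ with $d_i=|x_i|$ and $\omega_i\in\S^1$, reduce $\bigl||x_ix_j|-x_i\bar{x_j}\bigr|$ to $d_id_j|\omega_i-\omega_j|$, and invoke Lemma~\ref{clem2}. Your explicit handling of the zero coordinates (where $\omega_i$ would otherwise be undefined) is in fact slightly more careful than the paper, which glosses over this point.
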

\begin{proof}
Apply the previous lemma to $d_i=|x_i|$ and $x_i=d_i\omega_i$ for $\omega_i\in \S^1$. Then we have
\begin{align*}
\sum_{i,j} \left| |x_ix_j| - x_i\bar{x_j}\right|  
&= \sum_{i,j} \left| d_id_j - d_id_j\omega_i\bar{\omega_j}\right| \\
&= \sum_{i,j} d_id_j\left| \omega_i - \omega_j\right| \\
&\leq \gamma_n \left( \sum_i d_i\right)^2 = \gamma_n \left(\sum_i|x_i| \right)^2.
\end{align*}

\end{proof}


\begin{proposition}\label{cprop}
Let $A,B$ be two Hermitian semi-definite matrices of order $n$, with 
\[\mathrm{diag}(A)=\mathrm{diag}(B)=(d_1,\dots,d_n).\]
Then
\[\sum_{i,j} |A_{ij}-B_{ij}|\leq \gamma_n\left( \sum_{i=1}^n \sqrt{d_i} \right)^2.\]
\end{proposition}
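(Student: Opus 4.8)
The plan is to mimic the proof of Proposition \ref{prop1} from the real case, replacing Lemma \ref{lem1} with its complex analogue Lemma \ref{clem1} and the nonnegativity of $\bf{e}^TC\bf{e}$ with the stronger estimate coming from Corollary \ref{cor1}. First I would note that since $A,B$ are positive semi-definite with common diagonal $(d_1,\dots,d_n)$, their $2\times 2$ principal minors give $|A_{ij}|,|B_{ij}|\leq \sqrt{d_id_j}$, exactly as before. When $d_id_j=0$ the corresponding entries vanish, so we may restrict attention to indices with $d_id_j>0$. Applying Lemma \ref{clem1} with $d=\sqrt{d_id_j}$, $a=A_{ij}$, $b=B_{ij}$ yields
\[
|A_{ij}-B_{ij}| \leq \left| \sqrt{d_id_j} - \frac{\bar{A_{ij}}B_{ij}}{\sqrt{d_id_j}} \right|.
\]

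Next I would introduce the Hadamard-type product $C = \bar{A}\circ B$ (entrywise $C_{ij}=\bar{A_{ij}}B_{ij}$) together with the scaling vector $\bf{e}$ defined by $e_i = 1/\sqrt{d_i}$ when $d_i>0$ and $e_i=0$ otherwise, so that the bound above reads $|A_{ij}-B_{ij}| \leq |\sqrt{d_id_j} - e_ie_j C_{ij}|$ for all $i,j$. Summing over all $i,j$ and using the triangle inequality in the form $|\sqrt{d_id_j}-e_ie_jC_{ij}| \leq \sqrt{d_id_j} - \Re(e_ie_jC_{ij}) + |e_ie_jC_{ij} - \Re(e_ie_jC_{ij})|$ — or more cleanly, writing $\sqrt{d_id_j} = |e_ie_j|\cdot d_id_j$ and factoring — I would aim to reach an expression of the shape $\left(\sum_i\sqrt{d_i}\right)^2$ minus a sum that Corollary \ref{cor1} controls.

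Concretely, the cleanest route is: since $C = \bar{A}\circ B$ is positive semi-definite by the Schur product theorem (Lemma \ref{plem}), it has a Gram representation $C_{ij} = \langle \bf{u}_i,\bf{u}_j\rangle$ for vectors $\bf{u}_i$, and the diagonal constraint gives $C_{ii} = \bar{A_{ii}}B_{ii} = d_i^2$, so $|\bf{u}_i| = d_i$. Then $|e_ie_jC_{ij}| \leq |e_i||e_j||\bf{u}_i||\bf{u}_j| = \sqrt{d_id_j}$, and
\[
\sqrt{d_id_j} - e_ie_jC_{ij} = e_ie_j\big(d_id_j - \langle \bf{u}_i,\bf{u}_j\rangle\big),
\]
whose modulus is $e_ie_j\big||\bf{u}_i||\bf{u}_j| - \langle \bf{u}_i,\bf{u}_j\rangle\big|$. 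Summing and applying Corollary \ref{cor1} with $x_i$ chosen so that $|x_i|=|\bf{u}_i|$ — but the corollary is stated for scalars, so I would instead apply the Gram/rank-one decomposition $C = \sum_m \lambda_m \bf{v}_m\bf{v}_m^*$ and handle one rank-one summand $\bf{v}\bf{v}^*$ at a time: for a single vector $\bf{v}=(v_1,\dots,v_n)$, the contribution is $\sum_{i,j} e_ie_j\big||v_i||v_j| - v_i\bar{v_j}\big|$ which, after the substitution $y_i = e_i v_i$ feeding into Corollary \ref{cor1}, is bounded by $\gamma_n\left(\sum_i e_i|v_i|\right)^2 = \gamma_n\left(\sum_i |e_iv_i|\right)^2$. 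Re-aggregating over the eigenvalues $\lambda_m$ and using $\sum_m \lambda_m |v_{m,i}|^2 = C_{ii} = d_i^2$ together with the Cauchy–Schwarz / convexity bookkeeping should collapse everything to $\gamma_n\left(\sum_i\sqrt{d_i}\right)^2$.

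The main obstacle I anticipate is precisely this last bookkeeping step: Corollary \ref{cor1} is naturally a rank-one statement, and pushing it through a spectral decomposition of $C$ while keeping the constant exactly $\gamma_n$ (rather than something weaker) requires care — one must verify that the diagonal identity $\sum_m \lambda_m |v_{m,i}|^2 = d_i^2$ interacts correctly with the outer square $\left(\sum_i \cdot\right)^2$, likely via an application of the Cauchy–Schwarz inequality or the concavity of $\sqrt{\cdot}$ that happens to be tight at $d_1=\dots=d_n$ with $C$ itself rank one. I would organize the argument so that the equality case of Lemma \ref{clem2} — all $d_i$ equal, $\omega_i$ the $n$-th roots of unity — propagates to an equality case here, which serves as a sanity check that no slack has been introduced. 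If the direct spectral bookkeeping proves messy, an alternative is to bound $\sum_{i,j}|A_{ij}-B_{ij}|$ directly by $\left(\sum_i\sqrt{d_i}\right)^2 - \Re(\bf{e}^TC\bf{e}) + \big(\text{imaginary-part correction}\big)$ and then recognize the correction term as a quantity to which Lemma \ref{clem2} applies after diagonalizing $C$; but I expect the rank-one reduction above to be the more transparent path.
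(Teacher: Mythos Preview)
Your overall strategy matches the paper's: reduce via Lemma~\ref{clem1} to bounding $\sum_{i,j}|\sqrt{d_id_j}-e_ie_jC_{ij}|$ with $C=\bar A\circ B$, observe that $C$ is positive semi-definite, pass to a rank-one decomposition, and feed each rank-one piece into Corollary~\ref{cor1}. But there is a concrete gap at the step where you ``handle one rank-one summand at a time.'' With $C=\sum_m \lambda_m \bf{v}_m\bf{v}_m^*$ you have $\langle \bf u_i,\bf u_j\rangle=\sum_m\lambda_m v_{m,i}\bar{v_{m,j}}$, but you do \emph{not} have $|\bf u_i||\bf u_j|=\sum_m\lambda_m|v_{m,i}||v_{m,j}|$; only $|\bf u_i|^2=\sum_m\lambda_m|v_{m,i}|^2$ holds. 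So the quantity $\big||\bf u_i||\bf u_j|-\langle\bf u_i,\bf u_j\rangle\big|$ does not split as a sum over $m$ of terms of the form $\big||v_i||v_j|-v_i\bar{v_j}\big|$, and Corollary~\ref{cor1} cannot be applied ``summand by summand'' to the whole expression. Your proposed $\Re/\Im$ split has the same defect: the term $\sum_{i,j}|\Im(e_ie_jC_{ij})|$ is not in the form required by Corollary~\ref{cor1} once $C$ has rank $>1$.

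The missing idea --- and what the paper does --- is to introduce the auxiliary real matrix $D$ with entries $D_{ij}=\sum_m\lambda_m|v_{m,i}||v_{m,j}|$ (equivalently $D=\sum_s \bf{y}_s\bf{y}_s^*$ with $(\bf y_s)_i=|(\bf x_s)_i|$ after absorbing eigenvalues), and split through $D$ rather than through $\Re C$. Since $D$ is positive semi-definite with the same diagonal as the scaled $C$, one has $e_ie_jD_{ij}\le\sqrt{d_id_j}$, so $|\sqrt{d_id_j}-e_ie_jC_{ij}|\le(\sqrt{d_id_j}-e_ie_jD_{ij})+e_ie_j|D_{ij}-C_{ij}|$. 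The second sum decomposes over rank-one pieces exactly as you wanted and Corollary~\ref{cor1} bounds it by $\gamma_n\sum_{i,j}e_ie_jD_{ij}$; the first sum is $\bigl(\sum_i\sqrt{d_i}\bigr)^2-\sum_{i,j}e_ie_jD_{ij}$. Combining these you get $\bigl(\sum_i\sqrt{d_i}\bigr)^2+(\gamma_n-1)\sum e_ie_jD_{ij}$, and the final step uses $\gamma_n\ge1$ (which you also omitted) together with $\sum e_ie_jD_{ij}\le\bigl(\sum_i\sqrt{d_i}\bigr)^2$ to conclude. Your ``Cauchy--Schwarz/convexity bookkeeping'' hint is in the right direction but does not by itself produce this splitting; the explicit introduction of $D$ is the step that makes the argument go through with the sharp constant.
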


\begin{proof}
Define the vector $\bf{e}\in \C^n$ and the Hermitian matrix $C$ by the equations,
\[e_i = \left\{ \begin{array}{ll} \frac{1}{\sqrt{d_i}} & d_i>0 \\ 0 & d_i=0 \end{array}\right. ,
\quad
C_{ij}=e_ie_j\bar{A_{ij}}B_{ij}.\]

Then $\diag(C)=(d_1,\dots,d_n)$ and by semi-positivity of $A$ and $B$, for all $i,j$,
\[|A_{ij}|,|B_{ij}|\leq \sqrt{d_id_j},\]
and if $d_id_j=0$, $A_{ij}=B_{ij}=C_{ij}=0$.
Hence by Lemma \ref{clem1},
\begin{equation}\label{ineq1}
\sum_{i,j} |A_{ij}-B_{ij}| \leq \sum_{i,j} |\sqrt{d_id_j}-C_{ij}|.
\end{equation}

Let $X$ be a diagonal matrix with $e_1,\dots,e_n$ along its diagonal. Then
\[C= X^*\, (\bar{A}\circ B)\, X,\] and by Lemma \ref{plem}, 
 $ \bar{A}\circ B$ 
and also $C$ are positive semi-definite. Hence for some vectors $\bf{x}_1,\dots,\bf{x}_k\in \C^n$, we can write,
\[C = \bf{x}_1\bf{x}_1^*+\cdots+\bf{x}_k\bf{x}_k^*.\]

For every $i\leq k$, define $\bf{y}_i\in\R_{\geq 0}^n$ by,
\[(\bf{y}_i)_j=|(\bf{x}_i)_j|, \quad 1\leq j\leq n.\]
Now if we define 
\[D= \bf{y}_1\bf{y}_1^*+\cdots+\bf{y}_k\bf{y}_k^*,\]
we have $\diag(D)=\diag(C)=(d_1,\dots,d_n)$ and $D$ is also positive semi-definite.
So for all $i,j$, $|D_{ij}|\leq \sqrt{d_id_j}$, and by triangle inequality
\begin{equation}\label{ineq2}
\sum_{i,j} |\sqrt{d_id_j}-C_{ij}| 
\leq \sum_{i,j} \left( \sqrt{d_id_j}-D_{ij}+|D_{ij}-C_{ij}|\right).
\end{equation}

By the definition of $C$ and $D$ and applying Corollary \ref{cor1} to vectors
$\bf{x}_1,\dots,\bf{x}_k$, we also have
\begin{align}\label{ineq3}
\sum_{i,j} |D_{ij}-C_{ij}| &\leq 
\sum_{s=1}^k \sum_{i,j} |(\bf{y}_s)_i(\bf{y}_s)_j-(\bf{x}_s)_i\bar{(\bf{x}_s)_j}| \nonumber\\
&=  
\sum_{s=1}^k \sum_{i,j} |
|(\bf{x}_s)_i(\bf{x}_s)_j|-(\bf{x}_s)_i\bar{(\bf{x}_s)_j}| \nonumber\\
&\leq \gamma_n\sum_{s=1}^k \left(\sum_{i} (\bf{y}_s)_i\right)^2 \nonumber\\
&=  
\gamma_n\sum_{i,j} D_{ij}.
\end{align}
 
Finally combining inequalities \eqref{ineq1}, \eqref{ineq2} and \eqref{ineq3}, and the fact that $\gamma_n\geq 1$ for $n\geq 2$, we have 
\begin{align*}
\sum_{i,j} |A_{ij}-B_{ij}| &\leq 
 \sum_{i,j} (\sqrt{d_id_j}-D_{ij}) +
\gamma_n\sum_{i,j} D_{ij} \\
&\leq 
 \gamma_n\sum_{i,j} (\sqrt{d_id_j}-D_{ij}) +
\gamma_n\sum_{i,j} D_{ij} \\
&=\gamma_n \left( \sum_i \sqrt{d_i}\right)^2.
\end{align*}

\end{proof}

\begin{proof}[Proof of Theorem \ref{complex}]
This proof is similar to that of Theorem \ref{real}. If $\diag(A^+)=(d_1,\dots,d_n)$, we have
\begin{equation}
\norm{A}_1 = \mathrm{tr}(A^+)+\mathrm{tr}(A^-) = 2 \sum_i d_i,
\end{equation}
and by applying Proposition \ref{cprop} to $A^+$ and $A^-$,
\begin{align}\label{comst}
\sum_{i,j} |A_{ij}| &= \sum_{i,j} |A^+_{ij}-A^-_{ij}| \,\leq
 \gamma_n \left( \sum_{i=1}^n \sqrt{d_i} \right)^2 \nonumber\\
 &\leq n\gamma_n\sum_i d_i 
 \,=\frac{n\gamma_n}{2}\norm{A}_1 \,= \cot(\frac{\pi}{2n})\norm{A}_1.
 \end{align}

For an equality case, let $A$ be given by $\mathbbm{1}\mathbbm{1}^*-\bfg{\alpha}\bfg{\alpha}^*$ for the same notation defined in \eqref{1alpha}. It can be easily checked that $A$ is a Hermitian matrix with diagonal zero, and because 
\[\ \langle\mathbbm{1},\bfg{\alpha}\rangle=0,\quad  |\mathbbm{1}|^2=|\bfg{\alpha}|^2=n, \] 
it has two nonzero eigenvalues $n,-n$, and $\norm{A}_1=2n$. On the other hand, the equality statement of Lemma \ref{clem2}, gives,
\begin{align*}
\sum_{i,j}|A_{ij}| = \sum_{i,j} |1-\alpha_i\bar{\alpha_j}| 
= \sum_{i,j} |\alpha_i-{\alpha_j}| 
= n^2 \gamma_n.
\end{align*} 
Hence, $\frac{\norm{A}_1}{\norm{A}_{(1)}} = \frac{2n}{n^2\gamma_n}=\tan(\frac{\pi}{2n})$, which yields the desired equality for \eqref{comst}.
 
\end{proof} 
 \begin{remark}
From the relations in \eqref{gamma}, the sequence $\gamma_n = \frac{2}{n\tan(\frac{\pi}{2n})}$ converges  from below to $\frac{4}{\pi}$. So by Theorem \ref{complex}, for every nonzero Hermitian matrix $A$ of order $n$ with zero diagonal, we have $\norm{A}_{1} \geq \frac{2}{n\gamma_n} \norm{A}_{(1)}$ and so
\begin{equation}
\norm{A}_{1} > \;\frac{\pi}{2}\cdot \frac{1}{n} \norm{A}_{(1)},
\end{equation}
and $\frac{\pi}{2}$ is the best constant (independent of $n$) for which the above inequality holds for all $n$.
 \end{remark}

\section{The dual version}

In this section, we prove Theorems \ref{realdual} and \ref{complexdual}. The main idea is to view the inequality statements in Theorems \ref{real} and \ref{complex} as statements regarding the inclusion of certain convex sets. Then, the duality transformation is applied to these sets to obtain reverse inclusion relations between the resulting dual sets, and these inclusions are finally interpreted as a matrix norm inequality.

For every subscript $*$, we denote the closed unit ball of the norm $\norm{\cdot}_*$ by $\mathcal{B}_*$. Here, $\norm{\cdot}_*$ can be any of the norms $\norm{\cdot}_1,\norm{\cdot}_\infty,\norm{\cdot}_{(1)},\norm{\cdot}_{(\infty)}$ on the underlying spaces $\mathcal{S}_n$ or $\mathcal{H}_n$, and the underlying space is known from the context. All these balls are bounded closed sets and are consequently compact.

\begin{lemma}
Let $V$ be a finite dimensional real inner product space, and $X$ be a compact convex subset of $V$ that contains $0$. Then for every linear subspace $L$ of $V$,
\[\mathrm{cl}(\mathrm{conv}(X\cup L))=X+L.\]
\end{lemma}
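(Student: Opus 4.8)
The plan is to prove the two inclusions $\mathrm{cl}(\mathrm{conv}(X\cup L))\subseteq X+L$ and $X+L\subseteq\mathrm{cl}(\mathrm{conv}(X\cup L))$ separately, then observe that $X+L$ is already closed so no further closure is needed on the right-hand side.

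For the inclusion $X+L\subseteq\mathrm{cl}(\mathrm{conv}(X\cup L))$, I would first check that $X+L$ is convex: it is the Minkowski sum of two convex sets. Then, given $x\in X$ and $\ell\in L$, I want to exhibit $x+\ell$ as a limit of convex combinations of points of $X\cup L$. The natural device is that for $t\in(0,1)$,
\[
t\cdot x + (1-t)\cdot\frac{\ell}{1-t}
\]
is a convex combination of $x\in X$ and $\frac{\ell}{1-t}\in L$ (using that $L$ is a subspace), and as $t\to 1^-$ this tends to $x+\ell$. Hence $x+\ell\in\mathrm{cl}(\mathrm{conv}(X\cup L))$. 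Since $\mathrm{conv}(X\cup L)\subseteq X+L$ trivially ($X\subseteq X+L$, $L=\{0\}+L\subseteq X+L$ as $0\in X$, and $X+L$ convex), taking convex hulls and then closures gives one direction cleanly; the displayed limiting argument gives the reverse containment of $X+L$ into the closure.

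For the inclusion $\mathrm{cl}(\mathrm{conv}(X\cup L))\subseteq X+L$, the key point is that $X+L$ is closed, so it suffices to show $\mathrm{conv}(X\cup L)\subseteq X+L$, which was already noted above. Closedness of $X+L$ is where compactness of $X$ is used: the sum of a compact set and a closed set (here $L$, a subspace, is closed) is closed. A quick way to see this: if $x_k+\ell_k\to z$ with $x_k\in X$, pass to a subsequence with $x_k\to x\in X$ by compactness; then $\ell_k=(x_k+\ell_k)-x_k\to z-x$, and since $L$ is closed, $z-x\in L$, so $z=x+(z-x)\in X+L$.

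The main obstacle, such as it is, is being careful about the role of each hypothesis: compactness of $X$ is needed precisely to guarantee $X+L$ is closed (a sum of two merely-closed sets need not be closed), $0\in X$ is needed so that $L\subseteq X+L$, and convexity of $X$ together with $L$ being a subspace makes $X+L$ convex and makes the rescaling $\ell\mapsto\ell/(1-t)$ stay inside $L$. None of the steps require real computation; the proof is a short assembly of these standard facts, and I would present it in essentially the order above: convexity and the limiting argument for $\supseteq$, then closedness via compactness for $\subseteq$.
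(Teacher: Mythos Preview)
Your proof is correct and follows essentially the same approach as the paper: you use the identical limiting argument $tx+(1-t)\frac{\ell}{1-t}\to x+\ell$ for one inclusion, and for the other you observe that $X+L$ is convex, contains $X\cup L$, and is closed by the compact-plus-closed argument. The paper merely asserts closedness of $X+L$ as an easy consequence of compactness, whereas you spell out the subsequence argument, but the structure is the same.
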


\begin{proof}
Because of the compactness of $X$, it can be easily seen that $X+L$ is closed. It is also convex and, therefore, contains $\mathrm{cl}(\mathrm{conv}(X\cup L))$. Next, we demonstrate that every point in $X+L$ is a limit point of $\mathrm{conv}(X\cup L)$, which completes the proof.

Let $x+v$ (for $x\in X$ and $v\in L$) be an arbitrary point in $X+L$. Then we have
\[x+v = \lim_{t\to 1^-} tx+(1-t)\left(\frac{v}{1-t}\right).\]
Clearly, $tx+(1-t)\left(\frac{v}{1-t}\right)\in \mathrm{conv}(X\cup L)$ for $0\leq t<1$; thus, $x+v$ is a limit point of $\mathrm{conv}(X\cup L)$.
\end{proof}

\begin{proof}[Proof of Theorem \ref{realdual}]
First, note that the orthogonal complement of $\mathcal{D}_n$ is equal to the set of matrices with diagonal zero. Now let $A$ be an arbitrary real symmetric matrix in $\mathcal{B}_1\cap \mathcal{D}_n^\perp$. Then, $\norm{A}_1\leq 1$ and $A$ has zero entries on the diagonal. So by Theorem \ref{real},
$\norm{A}_{(1)} \leq \frac{n}{2}$, or equivalently, $A\in \frac{n}{2}\mathcal{B}_{(1)}$.
Therefore the inequality statement of Theorem \ref{real} implies the inclusion,
\begin{equation}\label{dualreal1}
\mathcal{B}_1 \cap \mathcal{D}_n^\perp \subset \frac{n}{2}\mathcal{B}_{(1)}.
\end{equation}

By taking dual of both sides, we obtain
\begin{equation}\label{dualeq}
\frac{2}{n}\mathcal{B}_{(1)}^* \subset \left( \mathcal{B}_1\cap \mathcal{D}_n^\perp\right)^*.
\end{equation}

Now note that the norms $\norm{\cdot}_1$ and $\norm{\cdot}_\infty$, are dual to each other, and so by definition,
\[\mathcal{B}_1^* = \mathcal{B}_{\infty}. \]
Similarly for the dual norms $\norm{\cdot}_{(1)}$ and $\norm{\cdot}_{(\infty)}$, we have
\[\mathcal{B}_{(1)}^* = \mathcal{B}_{(\infty)}. \]

The ball $\mathcal{B}_1$ is closed convex and contains zero, so by Lemma \ref{plem2}, 
\[\left( \mathcal{B}_1\cap \mathcal{D}_n^\perp\right)^*
=\mathrm{cl}(\mathrm{conv}(\mathcal{B}_1^* \cup (\mathcal{D}_n^\perp)^*))
=\mathrm{cl}(\mathrm{conv}(\mathcal{B}_\infty \cup \mathcal{D}_n)).\]
Using the previous lemma for $X=\mathcal{B}_\infty$ and $L=\mathcal{D}_n$, we have
\[\mathrm{cl}(\mathrm{conv}(\mathcal{B}_\infty \cup \mathcal{D}_n))=\mathcal{B}_\infty+\mathcal{D}_n.\]

Substituting the above relations in the inclusion \eqref{dualeq},and multiplication of the both sides by $\frac{n}{2}$ gives,
\begin{equation}\label{dualreal2}
\mathcal{B}_{(\infty)} \;\subset\; \frac{n}{2} \mathcal{B}_\infty+\mathcal{D}_n.
\end{equation}

This inclusion can also be easily converted into an equivalent matrix-norm inequality. Let $A=[a_{ij}]\in \mathcal{S}_n-\mathcal{D}_n$, and $M= \max_{i\neq j} |a_{ij}|$. Define the matrix $B$ as below,
\[B_{ij}=\left\{\begin{array}{ll}
0 & i=j \\ \frac{a_{ij}}{M} & i\neq j.
\end{array} \right.\]
Then for every $i\neq j$, $|B_{ij}|\leq 1$, and so $B\in\mathcal{B}_{(\infty)}$. In addition, $A=MB+D_1$ for a diagonal matrix $D_1$. Now, from \eqref{dualreal2}, $B=C+D_2$ for some $C\in \mathcal{S}_n$ with $\norm{C}_\infty\leq \frac{n}{2}$ and a diagonal matrix $D_2$. Therefore,

\[ \norm{A-(D_1+MD_2)}_\infty 
= \norm{MC}_\infty\leq \frac{n}{2}\max_{i\neq j} |a_{ij}|,\]
and hence, 
\begin{equation}\label{dualreal3}
\min_{D\in\mathcal{D}_n} \norm{A-D}_\infty \leq\frac{n}{2}\max_{i\neq j} |a_{ij}|.
\end{equation}

In the case of diagonal matrices $A$, the inequality above is trivial; therefore, it is valid for every $A\in \mathcal{S}_n$.

For the sharpness of the inequality, note that all of the above arguments  are reversible for any constant in place of $\frac{n}{2}$. Therefore, if inequality \eqref{dualreal3} remains valid for some other constant   smaller than $\frac{n}{2}$, this is also the case for Theorem \ref{real}, which we know is impossible. 
\end{proof}

\begin{remark}\label{remrealdual} By the equality 
\[\left( \mathcal{B}_1\cap \mathcal{D}_n^\perp\right)^*=
\mathcal{B}_\infty+\mathcal{D}_n,\]
 in above proof, for every matrix $A=[a_{ij}]\in\mathcal{S}_n$,
\begin{align*}
\min_{D\in \mathcal{D}_n} \norm{A-D}_\infty 
&= \min\{r\geq 0: A\in r\mathcal{B}_\infty+\mathcal{D}_n\} \\
&= \min\{r\geq 0: A\in r\left( \mathcal{B}_1\cap \mathcal{D}_n^\perp\right)^*\} \\
&= \max_{B\in \mathcal{B}_1\cap \mathcal{D}_n^\perp} \langle A,B\rangle.
\end{align*}
Therefore if $A'$ be the matrix obtained from $A$ by just changing the diagonal entries to zero, we have
\begin{align}
\min_{D\in \mathcal{D}_n} \norm{A-D}_\infty 
&= \max_{B\in \mathcal{B}_1\cap \mathcal{D}_n^\perp} \langle A,B\rangle \nonumber\\
&\leq \max_{B\in \mathcal{B}_1\cap \mathcal{D}_n^\perp} \norm{A'}_{(\infty)}\norm{B}_{(1)} \label{ineqrem1}\\
&\leq \frac{n}{2} \norm{A'}_{(\infty)}=\frac{n}{2}\max_{i\neq j} |a_{ij}|. \label{ineqrem2}
\end{align}
(The second inequality is by Theorem \ref{real}, which gives $\norm{B}_{(1)}\leq \frac{n}{2}$ for every $B\in\mathcal{B}_1\cap \mathcal{D}_n^\perp$.)

Now, if  $A=[a_{ij}]$, be an equality case in Theorem \ref{realdual}; that is,
\begin{equation}\label{eqrem}
\min_{D\in \mathcal{D}_n} \norm{A-D}_\infty =\frac{n}{2}\max_{i\neq j}|a_{ij}|,
\end{equation}
we must have equality in \eqref{ineqrem1} and \eqref{ineqrem2}. So there must be some $B\in \mathcal{D}_n^\perp$, such that 
\[\norm{B}_{(1)} = \frac{n}{2}\norm{B}_{1},\]
and 
\[\langle A,B\rangle =\norm{A'}_{(\infty)}\norm{B}_{(1)},\]
which means that if $M=\max_{i,j} |A'_{ij}|$, for every $i,j$ with $B_{ij}\neq 0$, $A'_{ij}=\pm{M}\mathrm{sgn}(B_{ij})$.

Therefore, all solutions to \eqref{eqrem} can be constructed as follows. Let $B=[b_{ij}]\in\mathcal{S}_n$ be a nonzero matrix with diagonal zero and $\norm{B}_{(1)} = \frac{n}{2}\norm{B}_{1}$. For an arbitrary $M\geq 0$, let $A=[a_{ij}]\in \mathcal{S}_n$  be the matrix for which,
\[\forall i\neq j, \; |a_{ij}|\leq M, \quad 
\mathrm{and \; if}\;  b_{ij}\neq 0 \; \mathrm{then}\; a_{ij}=M \,\mathrm{sgn}(b_{ij}).\]

The above construction can be applied to every introduced equality case of Theorem \ref{real} in Remark \ref{remreal}. For example if $n$ be even, we can define
\[ B=\mathbbm{1}\mathbbm{1}^*-\bf{v}\bf{v}^*, \quad\bf{v}_i=\left\{\begin{array}{ll}
1 & i\leq \frac{n}{2} \\ -1 & i>\frac{n}{2}
\end{array}\right. .\]
Then for $B$, we have $\norm{B}_{(1)} = \frac{n}{2}\norm{B}_{1}$, and
\[B=\left[\begin{matrix}
0 & 2I_{\frac{n}{2}} \\ 2I_\frac{n}{2} & 0
\end{matrix}\right].\]
So for every matrix $A$ of the form
\[A=\left[\begin{matrix}
* & I_{\frac{n}{2}} \\ I_\frac{n}{2} & *
\end{matrix}\right],\]
the equality \eqref{eqrem} holds; that is, $\min_{D\in\mathcal{D}_n} \norm{A-D}_\infty =\frac{n}{2}.$

\end{remark}

\begin{proof}[Proof of Theorem \ref{complexdual}]
The proof is completely similar to that of Theorem \ref{realdual}, and we omit the details. One shows first that Theorem \ref{complex}, implies the inclusion
\begin{equation}\label{dualc1}
\mathcal{B}_1 \cap \mathcal{D}_n^\perp \subset \cot(\frac{\pi}{2n})\mathcal{B}_{(1)},
\end{equation}
in the space $\mathcal{H}_n$. Applying the duality transformation, gives an equivalent inclusion,
\begin{equation}\label{dualc2}
\mathcal{B}_{(\infty)} \;\subset\; \cot(\frac{\pi}{2n}) \mathcal{B}_\infty+\mathcal{D}_n,
\end{equation} 
which can be rewritten as the following inequality for every $A=[a_{ij}]\in\mathcal{H}_n$,
\begin{equation}\label{dualc3}
\min_{D\in\mathcal{D}_n} \norm{A-D}_\infty \leq \cot(\frac{\pi}{2n})\max_{i\neq j} |a_{ij}|. 
\end{equation}
\end{proof}

\begin{remark}
To find the equality cases of Theorem \ref{complexdual}, a completely similar analysis to Remark \ref{remrealdual} can be made, and we only provide the final answer here.

All of solutions of the equation,
\begin{equation}\label{remf}
\min_{D\in\mathcal{D}_n} \norm{A-D}_\infty = \cot(\frac{\pi}{2n})\max_{i\neq j} |a_{ij}|,
\end{equation}
for $A\in\mathcal{H}_n$ can be constructed as follows. Let $B=[b_{ij}]\in\mathcal{H}_n$ be a nonzero matrix with diagonal zero, and $\norm{B}_{1} = \tan(\frac{\pi}{2n})\norm{B}_{(1)}$. For an arbitrary $M\geq 0$, let $A=[a_{ij}]\in \mathcal{H}_n$  be the matrix for which,
\[\forall i\neq j, \; |a_{ij}|\leq M, \;
\mathrm{and \; if}\;  b_{ij}\neq 0 \; \mathrm{then}\;  a_{ij}=M \,\frac{b_{ij}}{|b_{ij}|}.\]

For example, if we consider $B=\mathbbm{1}\mathbbm{1}^*-\bfg{\alpha}\bfg{\alpha}^*$ with the same notation in the proof of Theorem \ref{complexdual}, then we can obtain the following solution $A=[a_{ij}]$ for \eqref{remf},
\[\forall i,\; a_{ii}=0, \quad \forall i\neq j, \; a_{ij}=\frac{1-\zeta^{2(i-j)}}{|1-\zeta^{2(i-j)}|}=-\i\zeta^{i-j}\mathrm{sgn}(i-j),\]
when $\zeta=\exp(\frac{\i\pi}{n})$.

Now if we define $E\in\mathcal{H}_n$ by,
\[E_{ij}=\left\{\begin{array}{ll}
0 & i=j \\
\mathrm{sgn}(i-j)\i & i\neq j
\end{array}\right. ,\]
we have $A=-U E U^*$ for the diagonal matrix $U$ with diagonal entries 
$(\zeta,\zeta^2,\dots,\zeta^n)$. Also $U$ is a  unitary matrix and so,
\begin{align*} 
\cot(\frac{\pi}{2n})&=\min_{D\in\mathcal{D}_n} \norm{A-D}_\infty  =\min_{D\in\mathcal{D}_n} \norm{-UEU^*-D}_\infty \\
&=\min_{D\in\mathcal{D}_n} \norm{E+U^*DU}_\infty  
=\min_{D\in\mathcal{D}_n} \norm{E-D}_\infty.  
\end{align*}
Therefore, $E$ is a simple solution to \eqref{remf}. By further computation, $\norm{E}_\infty=\cot(\frac{\pi}{2n})$; thus, the nearest diagonal matrix to $E$ in the spectral norm is the zero matrix.
\end{remark}

\section{Proof of Lemma \ref{clem2}}\label{clem2sec}

\begin{proof}[Proof of Lemma \ref{clem2}.]
Define $\mathcal{X}$ to be the set of all of pairs $(\bf{d},\bfg{\omega})$ with 
\[\bf{d}=(d_1,\dots,d_n)\in\R^n_{\geq 0}, \;\sum_id_i=1,\quad 
\bfg{\omega}=(\omega_1,\dots,\omega_n)\in\S^n, \]
and define the function $F:\mathcal{X}\to\mathbb{R}$ by
\[F(\bf{d},\bfg{\omega})=\sum_{i,j} d_id_j |\omega_i-\omega_j|.\]
Also define
\begin{equation}\label{1alpha}
\mathbbm{1}=(1,\dots,1),\quad
\bfg{\alpha}=(\alpha_1,\dots,\alpha_n), \;\alpha_k=\exp(\i\frac{2(k-1)\pi}{n}), 1\leq k\leq n.
\end{equation}

Now, $\mathcal{X}$ is a compact set and  $F$ has a maximum value $M_n$ on $X$. The statement of lemma is equivalent to showing that $M_n=\gamma_n$ and 
$F(\frac{1}{n}\mathbbm{1},\bfg{\alpha})=\gamma_n$.

The proof is performed by induction on $n$. The case $n=1$ is trivial; therefore, we can assume $n>1$ and $M_{n-1}=\gamma_{n-1}$.

We have,
\begin{align*}
F(\frac{1}{n}\mathbbm{1},\bfg{\alpha}) &= 
\frac{1}{n^2}\sum_{ 1\leq i,j\leq n} |1-\bar{\alpha_i}{\alpha_j}| \\
&=  \frac{1}{n^2}\sum_{1\leq i,j\leq n} |1-\exp(\i\frac{2(j-i)\pi}{n})| \\
&= \frac{1}{n^2}\times n\sum_{k=0}^{n-1} |1-\exp(\i\frac{2k\pi}{n})| \\
&= \frac{2}{n} \sum_{k=0}^{n-1} \sin(\frac{k\pi }{n}) = \gamma_n.
\end{align*}

(By the identity $|1-\exp(\i t)|=\sqrt{2-2\cos(t)}=\sqrt{4\sin^2(\frac{t}{2})}=
2\sin(\frac{t}{2})$ for $t\in [0,2\pi]$.) Hence, $\gamma_n=F(\frac{1}{n}\mathbbm{1},\bfg{\alpha})\leq M_n$ and because $M_{n-1}=\gamma_{n-1}<\gamma_n$, $M_{n-1}<M_n$.

Now let 
\[ (\tilde{\bf{d}}=(\tilde{d}_1,\dots,\tilde{d}_n),
\tilde{\bfg{\omega}}=(\tilde{\omega}_1,\dots,\tilde{\omega}_n))\in \mathcal{X},\] be a maximum point for $F$. By a rotation and rearrangement of $\tilde{\omega}_i$'s and applying the same rearrangement to $\tilde{d}_i$'s, we can assume that $\tilde{\omega}_1=1$ and for every $1\leq k \leq n$, 
\[ \tilde{\omega}_k = \exp(2\i\theta_k),\quad 0=\theta_1\leq \theta_2\leq \dots\leq\theta_n<\pi.\]
By a sequence of steps, we prove that $(\tilde{\bf{d}},\tilde{\bfg{\omega}})=(\frac{1}{n}\mathbbm{1},\bfg{\alpha})$, and so $M_n=F(\tilde{\bf{d}},\tilde{\bfg{\omega}})=\gamma_n$, and we are done.

\vspace{10pt}\textbf{Step 1}. $\tilde{\omega}_i$'s are pairwise distinct.

 Suppose, on the contrary, $\tilde{\omega}_k=\tilde{\omega}_{k+1}$ for some $k$. Define the vectors $\bf{d}'=(d'_1,\dots,d'_{n-1})$ and $\bfg{\omega}'=(\omega'_1,\dots,\omega'_{n-1})$ by,
\[d'_i=\left\{ \begin{array}{ll} \tilde{d}_i & i< k \\ \tilde{d}_k+\tilde{d}_{k+1} & i=k \\ \tilde{d}_{i+1} & i>k \end{array}\right.,\quad
\omega'_i=\left\{ \begin{array}{ll} \tilde{\omega}_i & i\leq k \\  \tilde{\omega}_{i+1} &i>k \end{array}\right. .
\]
Then 
\begin{align*}
F(\tilde{\bf{d}},\tilde{\bfg{\omega}}) 
&= \sum_{i,j\neq k,k+1} \tilde{d}_i\tilde{d}_j|\tilde{\omega}_i-\tilde{\omega}_j| +
2\sum_{j} \tilde{d}_k\tilde{d}_j  |\tilde{\omega}_k-\tilde{\omega}_j| \\
&\quad +
2\sum_{j} \tilde{d}_{k+1}\tilde{d}_j  |\tilde{\omega}_{k+1}-\tilde{\omega}_j| \\
&= \sum_{i,j\neq k,k+1} \tilde{d}_i\tilde{d}_j|\tilde{\omega}_i-\tilde{\omega}_j| +
2\sum_{j} (\tilde{d}_k+\tilde{d}_{k+1})\tilde{d}_j  |\tilde{\omega}_k-\tilde{\omega}_j|\\
&= F(\bf{d}',\bfg{\omega}') \,\leq M_{n-1} \,<M_n,
\end{align*}
which is a contradiction.

\vspace{10pt}\textbf{Step 2}. All $\tilde{d}_i$'s are positive.

 If say $d_k$=0, it can be  easily seen that $F(\tilde{\bf{d}},\tilde{\bfg{\omega}}) $ is equal to the value of $F$ on the pair $(\bf{d}',\bfg{\omega}')$ obtained by deleting $\tilde{d}_k$ and $\tilde{\omega}_k$ from the coordinates of $\tilde{\bf{d}}$ and $\tilde{\bfg{\omega}}$. Again, $F(\bf{d}',\bfg{\omega}')\leq M_{n-1}<M_n$, which is not possible.

\vspace{10pt}\textbf{Step 3}. $\tilde{d}_1=\tilde{d}_2=\dots=\tilde{d}_n$.

For   vectors $\bfg{\omega}=(\omega_1,\dots,\omega_n)$, define the symmetric matrix $A=A(\bfg{\omega})=[|\omega_i-\omega_j|]_{i,j}$. Then we have 
\[F(\bf{d},\bfg{\omega})=\bf{d}^TA\bf{d}.\]

Because $\tilde{d}_i>0$ for every $1\leq i\leq n$, $\tilde{\bf{d}}$ is an interior point of the set $\{\bf{d}\in\R^n_{\geq 0} : \mathbbm{1}^T\bf{d}=0\}$ and maximizes the function $F(\cdot,\tilde{\bfg{\omega}})$ in this set. So by Lagrange multiplier theorem, there is a real scalar $\lambda$ such that
\[\nabla_\bf{d} F(\tilde{\bf{d}},\tilde{\bfg{\omega}})=\lambda \mathbbm{1}.\]
On the other hand,
\[\nabla_\bf{d} F(\bf{d},\bfg{\omega})=\nabla_\bf{d}(\bf{d}^TA\bf{d}) = 2A\bf{d}.\]
Hence,
\begin{equation}\label{first}
A(\tilde{\bfg{\omega}})\,\tilde{\bf{d}}=\frac{\lambda}{2} \mathbbm{1}.
\end{equation}

Now for every $i$, $\tilde{\omega}_i=\exp(2\i\theta_i)$ and $\tilde{\omega}_i$'s are pairwise distinct, so 
\[0=\theta_1<\dots<\theta_n<\pi.\] 
So in a sufficiently small neighborhood $\mathcal{B}$ of $\bfg{\theta}:=(\theta_1,\dots,\theta_n)$ in $\R^n$, every $\bfg{\sigma}=(\sigma_1,\dots,\sigma_n)\in B$ satisfies,
\[\sigma_1<\sigma_2<\dots<\sigma_n<\pi, \quad\sigma_n-\sigma_1<\pi.\]

Consider $\bfg{\omega}=(\omega_1,\dots,\omega_n)\in (\S^1)^n$ as a function of $\bfg{\sigma}=(\sigma_1,\dots,\sigma_n)\in \mathcal{B}$ by the formula,
\[\bfg{\omega}(\bfg{\sigma})=(\omega_1,\dots,\omega_n), \quad\omega_i = \exp(2\i\sigma_i), \quad 1\leq i\leq n.\]

Then for every $\bfg{\sigma}\in \mathcal{B}$, and  $\bfg{\omega} = \bfg{\omega}(\bfg{\sigma})$, there is a simpler formula for $A(\bfg{\omega})_{ij}$ in terms of $\sigma_k$'s,
\begin{align}
A(\bfg{\omega})_{ij} = |\omega_i-\omega_j|
&=|\exp(2\i\sigma_i )-\exp(2\i\sigma_j )| \nonumber\\
&= \sqrt{2-2\cos(2(\sigma_i-\sigma_j))}\nonumber\\
&=2\sin(|\sigma_i-\sigma_j|)
=\left\{\begin{array}{ll} 2\sin(\sigma_i-\sigma_j) &  i\geq j \\
2\sin(\sigma_j-\sigma_i) & i<j\end{array} \right. .
\end{align}

Now, $\tilde{\bfg{\omega}}=\bfg{\omega}(\bfg{\theta})$; hence, $\bfg{\theta}$ is a maximum for the function $F(\tilde{\bf{d}},\bfg{\omega}(\cdot))$ on the set $\mathcal{B}$. Thus derivation with respect to any of variables $\sigma_i$ for $1\leq i\leq n$, gives,

\begin{align*}
0 = \frac{\partial}{\partial \sigma_i}F(\tilde{\bf{d}},\tilde{\bfg{\omega}})
&= 2 \sum_{j=1}^n \tilde{d}_i\tilde{d}_j \frac{\partial}{\partial \sigma_i} A_{ij}(\tilde{\bfg{\omega}}) \nonumber \\
&= 4 \tilde{d}_i\left( \sum_{j<i} \tilde{d}_j \cos(\theta_i-\theta_j)  
-\sum_{j>i} \tilde{d}_j \cos(\theta_i-\theta_j)  \right),
\end{align*}

and because $\tilde{d}_i>0$ for all $i$, 
\begin{equation}\label{second}
\sum_{j<i} \tilde{d}_j \cos(\theta_i-\theta_j)  
-\sum_{j>i} \tilde{d}_j \cos(\theta_i-\theta_j) = 0, \quad 1\leq i \leq n.
\end{equation}

Now define the anti-symmetric matrix $E$ as follows,
\[E=[e_{ij}],\quad e_{ij}=\left\{\begin{array}{ll}
0 & i=j \\ -1 & i<j \\ 1 & i>j
\end{array} \right. .\]

So we can rewrite the equations \eqref{first} and \eqref{second} as bellow,
\begin{align}
\sum_{j=1}^n e_{ij}\tilde{d}_j \sin(\theta_i-\theta_j)=\frac{\lambda}{4}, \quad 1\leq i\leq n, \label{first2} \\
\sum_{j=i}^n e_{ij}\tilde{d}_j \cos(\theta_i-\theta_j)=0, \quad 1\leq i\leq n.\label{second2}  
\end{align}

Let's define
\[\phi_i = \exp(\i\theta_i), \quad 1\leq i\leq n.\]
Then we have
\[\phi_i\bar{\phi_j}=\exp(\i(\theta_i-\theta_j))
=\cos(\theta_i-\theta_j)+\i\sin(\theta_i-\theta_j).\]
Hence the equations \eqref{first2} and  \eqref{second2} are equivalent to one equation,
\begin{equation}\label{eq1}
\sum_{j=1}^n (\phi_ie_{ij}\bar{\phi_j})\,\tilde{d}_j=\frac{\i\lambda}{4}, \quad 1\leq i\leq n.
\end{equation}

After a conjugation and multiplication of both sides of the above equation by $\phi_i$ for every $i$, it becomes equivalent to the vector equation below,
\begin{equation}\label{eq}
E\,[\phi_1\tilde{d}_1\,\dots \,\phi_n\tilde{d}_n]^T= -\frac{\i\lambda}{4}\, [\phi_1 \,\dots\, \phi_n]^T.
\end{equation}

We claim that \eqref{eq} implies $\tilde{d}_1=\tilde{d}_2=\dots=\tilde{d}_n$. Let $1\leq k<n$. By \eqref{eq}, for every $i$,
\[\sum_{j=1}^n e_{ij}\tilde{d}_j\phi_j=-\frac{\i\lambda }{4} \phi_i.\]
Subtraction of the above equation for $i=k$ from the the equation for $i=k+1$, gives
\[\tilde{d}_k\phi_k+\tilde{d}_{k+1}\phi_{k+1}= -\frac{\i\lambda }{4} (\phi_{k+1}-\phi_{k}).\]
The right-hand side is perpendicular to $\phi_{k+1}-\phi_k$. But for the Euclidean inner product of the left  hand side with  $\phi_{k+1}-\phi_k$ we have,
\begin{align*}
\Re \left(
(\tilde{d}_k\phi_k+\tilde{d}_{k+1}\phi_{k+1})
\bar{(\phi_{k+1}-\phi_k)}\right) &=
\tilde{d}_{k+1}-\tilde{d}_k-
\tilde{d}_{k+1}\cos(\theta_{k+1}-\theta_k)\\
& \quad +\tilde{d}_k\cos(\theta_{k}-\theta_{k+1})\\
&= (\tilde{d}_{k+1}-\tilde{d}_k)(1-\cos(\theta_{k+1}-\theta_k)).
\end{align*}

However, $0<\theta_{k+1}-\theta_k<\pi$; therefore, $\cos(\theta_{k+1}-\theta_k)\neq 1$. Thus we have $\tilde{d}_{k+1}-\tilde{d}_k=0$, for all $k<n$ and by $\sum_i \tilde{d}_i=1$, we conclude 
\[\bf{\tilde{d}}=\frac{1}{n}\mathbbm{1}.\]
(End of Step 3.)

Now for $\mu = -\frac{n\lambda}{4}$, the equation \eqref{eq} can be simplified to
\begin{equation}\label{eq3}
E\bfg{\phi} = (\i\mu)\,\bfg{\phi},\quad  \bfg{\phi}=(\phi_1,\dots,\phi_n),
\end{equation}
which indicates that $\bfg{\phi}$ is an eigenvector of $E$.

\vspace{10pt}\textbf{Step 4}. $\tilde{\bfg{\omega}}=\bfg{\alpha}$.

Let $\zeta$ be a root of the equation $z^n=-1$. Then for every $1\leq i \leq n$, we have,
\begin{align*}
\sum_{j=1}^n e_{ij}\zeta^{j-1} &= 
\sum_{1\leq j<i} \zeta^{j-1}-\sum_{i<j\leq n} \zeta^{j-1} \\
&= -\left(\sum_{1\leq j<i} \zeta^{n+j-1}+\sum_{i<j\leq n} \zeta^{j-1} \right) \\
&= -\zeta^{i-1}\left(\zeta+\cdots+\zeta^{n-1}\right) \\
&= \zeta^{i-1} \left((-\zeta) \cdot\frac{1-\zeta^{n-1}}{1-\zeta}\right) \\
&=\zeta^{i-1} \left((-\zeta) \cdot\frac{1+\frac{1}{\zeta}}{1-\zeta}\right)
=\zeta^{i-1}\left(\frac{\zeta+1}{\zeta-1}\right).
\end{align*}

Hence 
\[E\,[1\; \zeta\; \dots \;\zeta^{n-1}]^T = \left(\frac{\zeta+1}{\zeta-1}\right)[1 \;\zeta\; \dots \;\zeta^{n-1}]^T,\]
and $(1, \zeta, \dots, \zeta^{n-1})$ is an eigenvector of $E$ for the eigenvalue $\frac{\zeta+1}{\zeta-1}$.  Now the equation $z^n=-1$ has $n$ distinct roots,
\[\zeta_i=\exp(\i\frac{(2i-1)\pi}{n}), \quad\mathrm{for}\; 1\leq i\leq n,\]

and because of the non-singularity of the Vandermonde matrix $[\zeta_i^{j-1}]_{i,j}$, the vectors $\bf{z}_i :=(1, \zeta_i, \dots, \zeta_i^{n-1})$ constitute a basis of the eigenvectors of $E$. On the other hand, the mapping $z\to \frac{z+1}{z-1}$ is one-to-one; thus, $E$ has $n$ distinct eigenvalues $\frac{\zeta_i+1}{\zeta_i-1}$.

Now $\bfg{\phi}=(\phi_1,\dots,\phi_n)$ is also an eigenvector of $E$, 
so for some $i$, $\bfg{\phi}$ is an scalar multiple of $\bf{z}_i$, 
and because $\phi_1= \exp(\i\theta_1)=1$, this scalar must be equal to $1$. Thus $\bfg{\phi}=\bf{z}_i$.

Now note that the arguments of the coordinates of $\bfg{\phi}$ are $0=\theta_1<\dots<\theta_n<\pi$. Also
 the argument of the $j$-th coordinate of $\bf{z}_i$ is 
$ a_j:=(j-1)\left(\frac{(2i-1)\pi}{n}\right)$ for $1\leq j\leq n$. For $j=2$,
$0<a_2= \frac{(2i-1)\pi}{n}<2\pi$, so $0<a_2=\theta_2<\pi$ and for every $1\leq j<n$,
\[0<a_{j+1}-a_j=\frac{(2i-1)\pi}{n} <2\pi,\quad a_{j+1}=\theta_{j+1}+2m\pi\; \mathrm{for}\;\mathrm{some}\; m\in \mathbb{Z}.\]
So by induction it can be easily seen that $a_j=\theta_j$ (for all $1\leq j\leq n$), which implies
\begin{align*}
\forall 1\leq j\leq n,\; (j-1)\left(\frac{(2i-1)\pi}{n}\right)<\pi &\;\Longrightarrow (n-1)\left(\frac{(2i-1)\pi}{n}\right)<\pi \\
&\;\Longrightarrow 2i-1<\frac{n}{n-1}\leq 2 \\
&\;\Longrightarrow i=1.
\end{align*}
So, $\bfg{\phi}=\bf{z}_1=(1, \zeta_1, \dots, \zeta_1^{n-1})$.

Now for every $1\leq i\leq n$ we have,
\[\tilde{\omega}_i=\exp(2\i\theta_i)=\phi_i^2=\zeta_1^{2(i-1)}=
\exp(\frac{2(i-1)\pi}{n})=\alpha_i.\]

Thus, $\tilde{\bfg{\omega}}=\bfg{\alpha}$ and the proof is complete.
\end{proof}

\textbf{Acknowledgements}. This work was supported in part by a grant (no. 1400050046)
from the School of Mathematics, Institute for Research in Fundamental Sciences (IPM).

\bibliographystyle{alpha}
\bibliography{Ref}

\vspace{10pt}
\noindent Mostafa  Einollahzadeh, Department of Mathematical Sciences, Isfahan University of Technology, Isfahan, Iran

\vspace{3pt}
\noindent School of Mathematics, Institute for Research in Fundamental Sciences (IPM), P.O. Box 19395-5746, Tehran, Iran

\vspace{3pt}
\noindent e-mail: m\_einollahzadeh@iut.ac.ir

%

\end{document}